\newtheorem{theorem}{Theorem}[section]
\newtheorem*{theorem*}{Theorem}
\newtheorem{corollary}[theorem]{Corollary}
\newtheorem*{corollary*}{Corollary}
\newtheorem{lemma}[theorem]{Lemma}
\newtheorem*{lemma*}{Lemma}
\newtheorem*{claim*}{Claim}
\newtheorem{proposition}[theorem]{Proposition}
\newtheorem*{proposition*}{Proposition}
\newtheorem{def-prop}[theorem]{Definition-Proposition}
\theoremstyle{definition}
\newtheorem{definition}[theorem]{Definition}
\newtheorem*{definition*}{Definition}
\newtheorem{remark}[theorem]{Remark}
\newtheorem{example}[theorem]{Example}
\newtheorem*{example*}{Example}
\numberwithin{equation}{section}
\newcommand{\bS}{\mathbf{S}}
\newcommand{\ZZ}{\mathbb{Z}}
\newcommand{\QQ}{\mathbb{Q}}
\newcommand{\GG}{\mathbb{G}}
\newcommand{\CUB}{\mathrm{CUB}}
\newcommand{\champcub}{{\mathcal Cub}}
\newcommand{\Hom}{\mathrm{Hom}}
\newcommand{\Ext}{\mathrm{Ext}}
\newcommand{\Aut}{\mathrm{Aut}}
\newcommand{\TR}{\mathrm{RLB}}
\newcommand{\uHom}{\underline{\mathrm{Hom}}}
\newcommand{\bPicard}{\mathrm{\mathbf{Picard}}}
\newcommand{\W}{\mathrm{W}}
\newcommand{\HH}{\mathrm{H}}
\newcommand{\R}{\mathrm{R}}
\newcommand{\id}{\mathrm{id}}
\newcommand{\st}{\mathrm{st}}
\newcommand{\cD}{\mathcal{D}}
\newcommand{\cG}{\mathcal{G}}
\newcommand{\cL}{\mathcal L}
\newcommand{\cM}{\mathcal{M}}
\newcommand{\cN}{\mathcal{N}}
\newcommand{\cQ}{\mathcal{Q}}
\newcommand{\cO}{\mathcal{O}}
\newcommand{\cX}{\mathcal{X}}
\newcommand{\cY}{\mathcal{Y}}
\newcommand{\cHom}{{\mathcal Hom}}
\newcommand{\PIC}{\mathrm{PIC}}
\newcommand{\Pic}{\mathrm{Pic}}
\newcommand{\ov}{\overline}
\def\Gm{\mathbb{G}_\textrm{m}}
\newcommand{\Sgo}{\mathfrak{S}}
\def\tore{T}
\def\vt{\widetilde{v}}
\def\philt{\widetilde{\varphi_L}}
\newcommand{\cartesien}{\ar@{}[dr]|{\square}}
\begin{document}

\UseAllTwocells
\title[line bundles]
{Morphisms of 1-motives defined by line bundles}

\author{Cristiana Bertolin}
\address{Dip. di Matematica, Universit\`a di Torino, Via Carlo Alberto 10, 
I-10123 Torino}
\email{cristiana.bertolin@googlemail.com}

\author{Sylvain Brochard}
\address{IMAG, Université de Montpellier, CNRS, Montpellier, France}
\email{sylvain.brochard@univ-montp2.fr}

\subjclass{14K30,14C20}

\keywords{1-motives, line bundles, linear morphisms, commutative 
group stack}

\date{}
\dedicatory{}

\commby{Cristiana Bertolin}


\begin{abstract}
Let $S$ be a normal base scheme. The aim of this paper is to study the line bundles on 1-motives defined over~$S$. 
We first compute a \emph{d\'evissage} of the Picard group of a 1-motive $M$
according to the weight filtration of $M$. This d\'evissage allows us 
to associate, to each line bundle $\cL$ on $M$, a linear morphism $\varphi_{\cL}: M \rightarrow M^*$ 
from $M$ to its Cartier dual. This yields a group homomorphism $\Phi : 
\Pic(M) / \Pic(S) \to \Hom(M,M^*)$.
We also prove the \emph{Theorem of the Cube} for 1-motives, which furnishes another construction of the group homomorphism  $\Phi : 
\Pic(M)  / \Pic(S) \to \Hom(M,M^*)$. 
Finally we prove that these two independent constructions of linear morphisms $M \to M^*$ using 
line bundles on $M$ coincide. However, the first construction, involving the d\'evissage of $\Pic(M)$, is more explicit and geometric and it furnishes the 
\emph{motivic origin} of some linear morphisms between 1-motives. 
The second construction, involving the Theorem of the Cube, is more abstract but also more enlightening.
\end{abstract}


\maketitle


\setcounter{tocdepth}{1}
\tableofcontents

\section{Introduction}

Let $A$ be an abelian variety over a field $k$ and let $A^*=\Pic^0_{A/k}$ be 
its dual. It is a classical fact that if $L$ is a line bundle on $A$, then the 
morphism $\varphi_L : A\to A^*$, defined by $\varphi_L(a)=\mu_a^*L\otimes 
L^{-1}$ where $\mu_a : A\to A$ is the translation by $a$, is a group 
homomorphism. This is an easy consequence of the Theorem of the Square, which 
itself is a consequence of the Theorem of the Cube.
We then have a functorial homomorphism  
$\Phi :\Pic(A)   \to \Hom(A,A^*)$ which is a key result in the basic foundations 
of the theory of abelian varieties.
In~\cite[Section 10]{D1} Deligne introduced the notion of 
1-motives, which can be seen as a generalization of abelian schemes. Let $S$ be 
a scheme. A 1-motive $M=(X,A,\tore,G,u)$ defined over $S$ is a complex
$[u: X \rightarrow G]$ of commutative $S$-group schemes concentrated 
in degree 0 and -1, where:
\begin{itemize}
    \item $X$ is an $S$-group scheme which is locally for the \'etale
topology a constant group scheme defined by a finitely generated free
$\ZZ \,$-module,
    \item $G$ is an extension of an abelian $S$-scheme $A$ by an $S$-torus 
$\tore,$
    \item $u:X \rightarrow G$ is a morphism of $S$-group schemes.
\end{itemize}
A linear morphism of 1-motives is a morphism of complexes of $S$-group schemes. 
We will denote by 
\[\Hom(M_1,M_2)\]
the group of linear morphisms from $M_1$ to $M_2$.
In this paper we study line bundles on a 1-motive $M$ and their 
relation to linear morphisms from $M$ to its Cartier dual $M^*$.

Our aim is to answer the following natural questions:
\begin{enumerate}
\item If $M$ is a 1-motive over $S$, is it 
possible to construct a functorial homomorphism  
$\Phi :\Pic(M)   \to \Hom(M,M^*)$ that extends the known one for 
abelian schemes?
\item Is there an analog of the Theorem of the Cube for 1-motives?
\end{enumerate}

We give a positive answer to both questions if the base scheme $S$ is normal 
(for comments on what happens if the base scheme $S$ is not 
normal, see Remark~\ref{rem:comments_normalness}).
 
The notion of line bundle on a 1-motive $M$ over $S$ already 
implicitly exists in the literature. Actually, in~\cite[p. 64]{Mumford65} 
Mumford introduced a natural notion of line bundles 
on an arbitrary $S$-stack $\cX$ (see~\ref{def-linebundle-stack}).
Since to any 1-motive $M$ over $S$ 
we can associate by~\cite[ \S 1.4]{SGA4} a commutative group stack 
$\st(M)$, we can define the category $\PIC(M)$ of line bundles on $M$ 
as the category of line bundles on $\st(M)$. The \emph{Picard group of} $M$, 
denoted by $\Pic(M)$, is the group of isomorphism classes of line bundles on $\st(M)$ 
(see Definition~\ref{def-linebundle-1-motive}).

The stack $\st(M)$ associated to a 1-motive $M=[X 
\stackrel{u}{\rightarrow}G]$ is isomorphic to the quotient 
stack $[G/X]$, where $X$ acts on $G$ by translations via $u$. Under this 
identification, the inclusion of 1-motives $\iota : G\to M$ corresponds to the 
projection map $G \to [G/X]$, which is étale and surjective. We can then 
describe line bundles on $M$ as couples 
\[(L, \delta)\]
where $L$ is a line bundle on $G$ and $\delta$ is a 
descent datum for $L$ with respect to the covering $\iota : G\to [G/X]$ (see Section~\ref{section:line_bundles}, after Lemma~\ref{lem:equiv-def}). Throughout this paper, we will use 
this description of line bundles on $M$, which amounts to say that 
a line bundle on a 1-motive $M$ is a line bundle on $G$ endowed with an action of $X$ that 
is compatible with the translation action of $X$ on $G$.

The main result of our paper is the following theorem, which generalizes to 
1-motives the classical homomorphism $\Phi :\Pic(A)   \to \Hom(A,A^*)$ for
abelian varieties.

\begin{theorem}\label{thm:existence_Phi}
Let $M$ be a 1-motive defined over a scheme $S$. Assume that the 
toric part of $M$ is trivial or that $S$ is normal. 
Then there is a functorial homomorphism
\begin{equation}
\Phi :  \Pic(M)/\Pic(S) \longrightarrow \Hom(M,M^*).
\label{eq:PHI}
\end{equation}
\end{theorem}

We actually provide two independent constructions of~$\Phi$:
\begin{enumerate}
	\item The first construction, given in 
Section~\ref{section:direct_construction}, is the most explicit and 
geometric one. It is based on the ``\emph{d\'evissage}'' of the Picard group of $M$, computed in 
Section~\ref{section:devissage}, and on the explicit functorial description of the 
Cartier dual~$M^*$ of $M$ in terms of extensions given in~\cite[(10.2.11)]{D1}. 
\item The second construction, given in 
Sections~\ref{section:linear_morphisms_def_by_cubical} 
and~\ref{section:thm_cube}, is more abstract but also more enlightening. It 
works for a category which is a bit larger than 1-motives (see~\ref{thm:cube}) 
and it also provides the fact that $\Phi$ is a group homomorphism. This 
construction relies on the ``\emph{Theorem of the Cube for 1-motives}'' (Theorem \ref{thm:cube}), a result that 
we think is of independent interest, and on the description of the Cartier dual of a 
1-motive in terms of commutative group stacks.
\end{enumerate}

In Proposition~\ref{thm:comparaison} we prove that these two constructions coincide.

\textbf{D\'evissage of the Picard group of $M$}: 1-motives are endowed with a weight filtration $\W_*$ defined by
$ {\W}_{0}(M) = M,  {\W}_{-1}(M) = G, {\W}_{-2}(M) = \tore, {\W}_{j}(M) = 0 $ for each $ j \leq -3.$
This weight filtration allows us to ``\emph{d\'evisser}'' the Picard group of $M$, which is our second main result:
 we will first describe the Picard group of $G$ in terms of $\Pic(A)$ and $\Pic(\tore)$ using the first 
short exact sequence $0 \rightarrow  \tore \stackrel{i}{\rightarrow} G
\stackrel{\pi}{\rightarrow}  A \rightarrow  0$ given by $\W_*$.
Consider the morphism 
\[
\xi : \Hom(\tore,\GG_m) \rightarrow  \Pic (A)
\label{eq:xi}
\]
defined as follows: for any morphism of $S$-group schemes  $\alpha:\tore \to 
\GG_m $, 
$\xi(\alpha)$ is the image of the class $[\alpha_* G ]$ of the push-down of $G$ via $\alpha$
under the inclusion $ \Ext^1(A,\GG_m) \hookrightarrow H^1(A,\GG_m) =\Pic (A)$.
At the beginning of Section \ref{section:devissage} we will show that

\begin{proposition}\label{prop:TGA}
Assume the base scheme $S$ to be normal. The following 
sequence of groups is exact
\[
0\longrightarrow
\Hom(G,\GG_m) \stackrel{i^*}{\longrightarrow}  
\Hom(\tore,\GG_m) \stackrel{\xi}{\longrightarrow} 
\frac{\Pic(A)}{\Pic(S)} \stackrel{\pi^*}{\longrightarrow}  
\frac{\Pic (G)}{\Pic(S)} \stackrel{i^*}{\longrightarrow} 
\frac{\Pic(\tore)}{\Pic(S)}.
\]
\end{proposition}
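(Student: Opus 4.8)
The plan is to extract the five-term exact sequence from the Leray (or Hochschild–Serre) spectral sequence for the torus-torsor $\pi : G \to A$ with structure group $\tore$, applied to the sheaf $\Gm$, and then identify each term and each map concretely.

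\medskip

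\textbf{Step 1: the low-degree exact sequence.} Since $G \to A$ is a $\tore$-torsor, I would use the relative cohomology of $\Gm$ along $\pi$. Writing $\mathrm{R}^q\pi_*\Gm$ for the higher direct images for the fppf (or étale) topology, the edge maps of the Leray spectral sequence $\HH^p(A,\mathrm{R}^q\pi_*\Gm) \Rightarrow \HH^{p+q}(G,\Gm)$ give the exact sequence
\[
0 \to \HH^1(A,\pi_*\Gm) \to \HH^1(G,\Gm) \to \HH^0(A,\mathrm{R}^1\pi_*\Gm) \to \HH^2(A,\pi_*\Gm).
\]
One extends this on the left by $0 \to \HH^0(A,\pi_*\Gm) \to \HH^0(G,\Gm)$. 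The crucial computation, which uses that $S$ is normal, is that $\pi_*\Gm$ is just $\Gm$ on $A$: indeed a torus-torsor is an affine morphism with geometrically connected, geometrically normal fibres (the fibres are torsors under a torus, hence are torus-like), so functions on $G$ come from $A$ étale-locally, i.e.\ $\pi_*\cO_G^\times = \cO_A^\times$. Normality of the total space (inherited from normality of $S$ and smoothness of $G \to S$) is what prevents extra units.

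\medskip

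\textbf{Step 2: identify $\mathrm{R}^1\pi_*\Gm$.} A $\tore$-torsor is locally trivial for the Zariski topology on $A$ if $A$ has nice cohomology, but in general it is étale-locally trivial; in either case the sheaf $\mathrm{R}^1\pi_*\Gm$ classifies line bundles on the torus-torsor fibrewise. Since $\tore$ is split (étale-locally) of rank $r$, one has $\Pic(\tore \times_S U) = \Pic(U)$ for $U$ regular, so the relevant sheaf of fibrewise Picard groups captures the characters: I expect $\HH^0(A,\mathrm{R}^1\pi_*\Gm)$ to be naturally identified with $\Hom(\tore,\Gm)$ (the group of characters of the torus), the identification sending a character $\alpha$ to the class of the line bundle obtained by pushing the $\tore$-torsor $G$ forward along $\alpha$. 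Here again normality of the base is used so that $\Pic$ of a product with a torus behaves well (no torsion coming from the torus factor). With these identifications the five-term sequence reads
\[
0 \to \HH^0(A,\Gm) \to \HH^0(G,\Gm) \to \Hom(\tore,\Gm) \to \HH^1(A,\Gm) \to \HH^1(G,\Gm),
\]
up to replacing the absolute cohomology groups by their quotients modulo $\Pic(S) = \HH^1(S,\Gm)$ and modulo $\HH^0(S,\Gm)$ respectively, which is legitimate because $\Gm$-cohomology of $A$, $G$ and $S$ fit into compatible sequences via the structure sections $S \to A$, $S \to G$ (the unit sections), allowing one to split off the $S$-part.

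\medskip

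\textbf{Step 3: match the maps and extend on the right.} The map $\HH^0(G,\Gm)\to \Hom(\tore,\Gm)$ identifies with $i^*$ (restrict a global unit on $G$ to the fibre $\tore$ over the origin, which is a character), the connecting map $\Hom(\tore,\Gm)\to \Pic(A)$ identifies with $\xi$ by the very definition of $\xi$ via $[\alpha_*G] \in \Ext^1(A,\Gm)\subset\Pic(A)$, and $\HH^1(A,\Gm)\to\HH^1(G,\Gm)$ is $\pi^*$. To obtain the final term $\pi^*\Pic(G)\to i^*\Pic(\tore)$ one uses that pulling a line bundle on $G$ back along the origin-fibre inclusion $i : \tore \hookrightarrow G$ lands in $\Pic(\tore)$, and exactness at $\Pic(G)/\Pic(S)$ says a line bundle on $G$ that is trivial on the fibre comes from $A$; this is a standard ``seesaw''-type argument, or alternatively it follows by continuing the spectral sequence one more step and again invoking normality.

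\medskip

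\textbf{Main obstacle.} The technical heart is Step 1–2: proving $\pi_*\Gm = \Gm$ and computing $\mathrm{R}^1\pi_*\Gm$ correctly. This is exactly where normality of $S$ enters — on a non-normal base a torus can acquire extra global units or its torsors extra Picard classes, breaking exactness. I would either cite a clean statement (e.g.\ the behaviour of $\Pic$ and units under torus torsors over regular/normal bases, as in Raynaud or in the literature on $\mathbf{Picard}$ functors) or give a short descent argument reducing to the split case $G \cong \tore\times_S A$ fppf-locally and then to the identity $\Pic(\tore\times_S U)=\Pic(U)$ and $\HH^0(\tore\times_S U,\Gm)=\HH^0(U,\Gm)\times\Hom(\tore,\Gm)$ for $U$ normal.
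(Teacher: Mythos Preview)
Your spectral-sequence approach is a legitimate alternative to the paper's argument (which goes through Moret-Bailly's theory of cubical line bundles: one identifies $\CUB(A)$ with pairs consisting of a cubical line bundle on $G$ and a trivialization of its restriction to $\tore$, then uses normality of $S$ to replace cubical by rigidified line bundles, and finally reads off the exact sequence from the isomorphism-class sequence of this equivalence, identifying the automorphism groups on the left via Rosenlicht). However, your execution contains a concrete error that propagates through the computation.

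The claim in Step~1 that $\pi_*\GG_m=\GG_m$ is false. A torus has many global units that are not constants: for $\tore=\GG_m^r$ and $U$ normal one has $\GG_m(\tore\times_S U)\simeq \GG_m(U)\times \ZZ^r$, the extra factor coming from characters. Geometric connectedness of the fibres of $\pi$ gives $\pi_*\cO_G=\cO_A$ only when the fibres are proper; here they are affine tori and $\pi_*\cO_G$ is much larger. The correct statement is that (over a normal base, and \'etale-locally on $A$) there is an exact sequence of sheaves on $A$
\[
1 \longrightarrow \GG_m \longrightarrow \pi_*\GG_m \longrightarrow \tore^D \longrightarrow 1,
\]
where $\tore^D=\underline{\Hom}(\tore,\GG_m)$. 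Dually, your Step~2 identification of $\HH^0(A,\mathrm{R}^1\pi_*\GG_m)$ with $\Hom(\tore,\GG_m)$ is also off: since $\Pic(\tore\times_S U)=\Pic(U)$ for $U$ normal, one has $\mathrm{R}^1\pi_*\GG_m=0$, not the character group. These two errors happen to cancel in the sense that the long exact sequence in cohomology on $A$ attached to the displayed short exact sequence, combined with the collapsed Leray sequence $\HH^1(A,\pi_*\GG_m)\simeq \HH^1(G,\GG_m)$, does produce exactly the five-term sequence you wrote in Step~2. But as written your derivation is incorrect, and the final identification of the right-hand term (your $\Pic(\tore)/\Pic(S)$ versus the cohomological $\HH^1(A,\tore^D)$) still needs an argument. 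If you want to push the spectral-sequence route through, compute $\pi_*\GG_m$ correctly and run the long exact sequence for the extension above; that will give the proposition cleanly.
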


The second short exact sequence
$0 \rightarrow  G \stackrel{\iota}{\rightarrow} M
\stackrel{\beta}{\rightarrow}  X[1] \rightarrow  0$ given by the weight 
filtration $\W_*$ of $M$ induces by pullback the sequence
$\Pic(X[1]) \stackrel{\beta^*}{\rightarrow}
\Pic(M)  \stackrel{\iota^*}{\rightarrow}
\Pic(G), $
which is not exact as we will see in
Example~\ref{sequence_not_exact}, but which is nevertheless interesting since the 
kernel of the homomorphism $\iota^* : \Pic(M)\to \Pic(G)$ fits in a long
exact sequence. In fact, at the end of Section \ref{section:devissage} we will prove that

\begin{proposition}
\label{prop:devissage_kernel}
Assume the base scheme $S$ to be reduced. Then 
the kernel $K$ of the homomorphism $\iota^* : \Pic(M)\to 
\Pic(G)$ fits in an
exact sequence
\[
 \Hom(G,\GG_m) \stackrel{\circ u}{\longrightarrow}  \Hom(X,\GG_m) \stackrel{\beta^*}{\longrightarrow} K \stackrel{\Theta}{\longrightarrow} \Lambda
 \stackrel{\Psi}{\longrightarrow} \Sigma.
\]
Note that the group $\Hom(X,\GG_m)$ in the above sequence identifies in a natural way with 
$\Pic(X[1])/\Pic(S)$.
\end{proposition}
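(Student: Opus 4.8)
The plan is to analyze the kernel $K = \ker(\iota^* : \Pic(M) \to \Pic(G))$ directly using the description of line bundles on $M$ as pairs $(L, \delta)$ where $L$ is a line bundle on $G$ and $\delta$ is a descent datum along $\iota : G \to [G/X] = \st(M)$. An element of $K$ is represented by a pair $(L, \delta)$ with $L \cong \cO_G$ (possibly after twisting by something from $\Pic(S)$, which is why we work modulo $\Pic(S)$, or rather we should note that $\iota^*$ is well-defined on the quotient and $K$ here means the kernel computed appropriately — I would first make precise that since $S$ is reduced, $\Hom(X,\GG_m)$ is a discrete group and the identification $\Pic(X[1])/\Pic(S) \cong \Hom(X,\GG_m)$ holds because a line bundle on the classifying stack $X[1] = B X$ is, étale-locally on $S$ where $X$ is constant free of rank $r$, a character of $\ZZ^r$, i.e.\ an element of $\Hom(X,\GG_m)$, and reducedness kills the infinitesimal ambiguity). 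Once $L = \cO_G$ is fixed, a descent datum is an isomorphism over $G \times_{[G/X]} G \cong G \times X$ satisfying a cocycle condition; unwinding this, a descent datum on $\cO_G$ amounts to a $1$-cocycle for $X$ acting on $G$ with values in $\Hom(G, \GG_m)$-type data, but since we also allow $L$ to move within its isomorphism class over $G$, the right object is a crossed homomorphism $X \to \Gamma(G, \cO_G^*) = \HH^0(G,\GG_m)$ modulo coboundaries, i.e.\ $\HH^1(X, \HH^0(G,\GG_m))$, together with the obstruction living in a group built from $\HH^1(G,\GG_m) = \Pic(G)$ fixed by $X$.

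Concretely, I would set up the Hochschild–Serre / Cartan–Leray type spectral sequence (or its low-degree exact sequence) for the ``covering'' $G \to \st(M)$ with ``group'' $X$, computing $\HH^*(\st(M), \GG_m)$ from $\HH^p(X, \HH^q(G, \GG_m))$. The relevant low-degree exact sequence reads
\[
0 \to \HH^1(X, \HH^0(G,\GG_m)) \to \Pic(\st(M)) \to \HH^0(X, \Pic(G)) \to \HH^2(X, \HH^0(G,\GG_m)),
\]
and the kernel $K$ of $\iota^* : \Pic(\st(M)) \to \Pic(G)$ is exactly the part landing in $\HH^0(X, \Pic(G))$ that dies in $\Pic(G)$, i.e.\ $K$ surjects onto $\HH^1$ after the edge map... — more precisely, $K = \HH^1(X, \HH^0(G, \GG_m)) / (\text{image of } \HH^1(G,\GG_m)\text{-coboundaries})$ enlarged by a piece measuring failure of exactness of the pullback sequence. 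This is where the maps in the proposition come from: $\Hom(X, \GG_m) \xrightarrow{\beta^*} K$ is the inclusion of the ``constant'' crossed homomorphisms (characters of $X$, which give descent data on $\cO_G$ that do not extend to $G$), with the relation $\Hom(G,\GG_m) \xrightarrow{\circ u} \Hom(X,\GG_m)$ being precomposition with $u : X \to G$ since a global unit on $G$ restricted along $u$ trivializes the corresponding character-descent-datum. Then $\Theta : K \to \Lambda$ and $\Psi : \Lambda \to \Sigma$ should be the connecting maps extracting, from a line bundle in $K$, the $X$-fixed class in $\Pic(G)$ it ``wants to be'' (so $\Lambda = \HH^0(X,\Pic(G))$ or a suitable subquotient, likely $\Lambda = \{$classes of line bundles on $G$ fixed by the $X$-action$\}$) and the obstruction $\Psi$ in $\Sigma = \HH^2(X, \HH^0(G,\GG_m))$ (or a concrete group of $\GG_m$-valued $2$-cocycles) to equipping such a fixed class with an actual descent datum.

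So the key steps, in order, are: (i) fix the identification $\Pic(X[1])/\Pic(S) \cong \Hom(X,\GG_m)$ using reducedness of $S$ and étale-local triviality of $X$; (ii) write any class in $K$ by a pair $(\cO_G, \delta)$ and interpret $\delta$ as a crossed homomorphism $c_\bullet : X \to \HH^0(G,\GG_m^{})$, modulo those coming from automorphisms of $\cO_G$ (global units) — noting that only the ``value at a geometric point'' part can be non-coboundary, giving a genuine character $X \to \GG_m$; (iii) identify the image of $\Hom(G,\GG_m)$: a unit $f$ on $G$ produces the principal crossed homomorphism $x \mapsto (u(x)^* f)/ f$, whose ``character part'' is $x \mapsto f(u(x))/f(0) $, i.e.\ $f \circ u$ up to normalization — establishing exactness at $\Hom(X,\GG_m)$; (iv) define $\Theta$ as the map sending $(L,\delta) \in K$ to the isomorphism class $[L] \in \Pic(G)$ together with the data making it $X$-invariant, landing in an appropriate group $\Lambda$ of $X$-invariant bundles, and check exactness at $K$ (the kernel of $\Theta$ is precisely bundles of the form $(\cO_G, \text{character descent datum})$, i.e.\ the image of $\beta^*$); (v) define $\Psi$ as the obstruction map to lifting an $X$-invariant class to an honest descent datum, with values in $\Sigma$, and check exactness at $\Lambda$. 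I expect the main obstacle to be step (ii)/(iii): carefully tracking how much of a descent datum on $\cO_G$ is ``rigid'' (a true character of $X$) versus ``flabby'' (absorbed into $\Gamma(G,\cO_G^*)$ via a coboundary), which is exactly where the hypothesis that $S$ is reduced enters — it guarantees $\Gamma(G,\cO_G^*) / \Gamma(S, \cO_S^*)$ has no infinitesimal part and that $\Hom(X, \GG_m)$ is the honest group of locally-constant characters, so that the cocycle description collapses to the clean five-term sequence in the statement. The definitions of $\Lambda$, $\Sigma$, $\Theta$, $\Psi$ themselves I would pin down so that the sequence is forced to be exact by the spectral sequence formalism, rather than verifying exactness by hand at each spot.
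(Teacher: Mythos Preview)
Your spectral-sequence instinct is reasonable and the identification $K \cong \HH^1(X, \Gamma(G,\cO_G^*))$ (descent data on $\cO_G$ modulo coboundaries) is correct. But step (iv) contains a genuine error that derails the rest: you propose that $\Theta$ sends $(L,\delta)\in K$ to the class $[L]\in \Pic(G)$, landing in $\Lambda = \Pic(G)^X$. By definition of $K$, the class $[L]$ is trivial, so this map is zero. The groups $\Lambda$ and $\Sigma$ in the statement are \emph{not} $E_2^{0,1}=\Pic(G)^X$ and $E_2^{2,0}$ from the five-term sequence; those terms sit in a different exact sequence (the one governing the image and cokernel of $\iota^*$, not its kernel).

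In the paper, $\Lambda$ is the subgroup of $\Hom(X,G^D)$ (where $G^D=\uHom(G,\GG_m)$) consisting of those $\lambda$ with $\lambda(x)(u(y))=\lambda(y)(u(x))$, and $\Sigma$ is the quotient of symmetric bilinear maps $X\times_S X\to\GG_m$ by those of the form $(x,y)\mapsto \alpha(x+y)\alpha(x)^{-1}\alpha(y)^{-1}$. The map $\Theta$ sends $[(\cO_G,\delta)]$ to $\lambda_\delta$ defined by $\lambda_\delta(x)(g)=\delta(x,g)/\delta(x,1)$; Rosenlicht's Lemma (this is where reducedness of $S$ enters) forces $\lambda_\delta(x):G\to\GG_m$ to be a group homomorphism, and the cocycle condition on $\delta$ yields both additivity in $x$ and the symmetry condition. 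The map $\Psi$ sends $\lambda$ to the class of $(x,y)\mapsto\lambda(x)(u(y))$. Exactness at each spot is then checked directly by elementary manipulations with the cocycle equation $\delta(x+y,g)=\delta(x,u(y)g)\delta(y,g)$.

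If you want to salvage the cohomological framing: the correct further d\'evissage of $K=\HH^1(X,\Gamma(G,\cO_G^*))$ comes not from the Leray five-term sequence but from the short exact sequence of $X$-modules $0\to \Gamma(S,\cO_S^*)\to \Gamma(G,\cO_G^*)\to \Hom(G,\GG_m)\to 0$ (the quotient identification again uses Rosenlicht, hence $S$ reduced). Its long exact sequence in group cohomology gives
\[
\Hom(G,\GG_m)\xrightarrow{\circ u}\Hom(X,\GG_m)\to K \to \Hom(X,G^D)\to \HH^2(X,\GG_m(S)),
\]
which is essentially the paper's sequence; the refinement to $\Lambda\subset\Hom(X,G^D)$ and the explicit $\Sigma$ then records precisely which $\lambda$ actually arise from cocycles $\delta$. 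So your overall strategy can be made to work, but the target of $\Theta$ must be identified correctly.
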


Here the group $\Lambda$ is the
subgroup of $\Hom(X,G^D)$, where $G^D$ is the group scheme $\underline{\Hom}(G, 
\GG_m)$,
consisting 
of those morphisms of $S$-group schemes that satisfy the
equivalent conditions of Lemma \ref{lem:equivalentconditions}, and 
 $\Sigma$ is a quotient of the group of 
symmetric bilinear morphisms $X\times_S X \to \GG_m$ 
(see Definition \ref{Lambda} and (\ref{eq:Theta}) for the definitions of  
$\Lambda, \Sigma, \Psi$ and $\Theta$). Remark that there is a natural identification of $K$ with the kernel of $\Pic(M) /\Pic(S) \to \Pic(G) /\Pic(S)$ and so the map $\beta^*$ in the above sequence is really the 
pullback along $\beta : M \to X[1]$.

\textbf{Theorem of the Cube for 1-motives}: In its classical form, the Theorem of the Cube asserts that for any line 
bundle $L$ on an abelian variety, the associated line bundle $\theta(L)$ 
is trivial (see Section~\ref{section:linear_morphisms_def_by_cubical} for the 
definitions of $\theta(L)$ and $\theta_2(L)$). 
In~\cite{Breen_Fonctions_Theta} Breen proposed  the following \emph{reinforcement of the Theorem of 
the Cube}. A cubical structure on $L$ is a section of~$\theta(L)$ that satisfies 
some additional conditions so that $\theta_2(L)$ is endowed with a structure of 
symmetric biextension. A cubical line bundle is a line bundle endowed with a 
cubical structure. Then a commutative $S$-group scheme $G$ is said to satisfy the (strengthened form of 
the) Theorem of the Cube if the forgetful functor 
$$ \CUB(G) \longrightarrow \TR(G)$$ 
from the category $\CUB(G)$ of 
cubical line bundles on $G$ to the category $\TR(G)$ of rigidified line 
bundles on $G$ is an equivalence of categories.

The notion of cubical structure introduced by Breen generalizes seamlessly 
to commutative group stacks (see Definition \ref{def:cub}). In a very general 
context, in Theorem \ref{thm:morphismes_induits_par_fi_cubistes}, we explain how 
a cubical line bundle $(\cL,\tau)$ on a commutative group stack $\cG$ 
defines an additive 
functor from $\cG$ to its dual $ D(\cG)={\cHom}(\cG,B\GG_m)$:
\[
\begin{array}{lrcl}
\varphi_{(\cL,\tau)} : & \cG & \longrightarrow & D(\cG) \\
    & a & \longmapsto & \big(b\mapsto  \cL_{ab}\otimes \cL_a^{-1} \otimes \cL_b^{-1}\big) \, .
\end{array}
\]

In Theorem \ref{thm:cube} we show that over a normal base scheme, 
1-motives satisfy the Theorem of the Cube in the above sense, which is our 
third main result. Then Theorem~\ref{thm:existence_Phi} is an immediate 
corollary of Theorems~\ref{thm:morphismes_induits_par_fi_cubistes} 
and ~\ref{thm:cube}.
Remark that the quotient $\Pic (M) /\Pic(S)$ 
is isomorphic to the group of isomorphism classes of rigidified line 
bundles on $M$.

We finish observing that the construction of the morphism $\Phi(L,\delta): M \to M^*$,
with $(L,\delta)$ a line bundle on $M$, that we give in Section~\ref{section:direct_construction}, 
is completely geometric and so it allows the computation of the Hodge, the De Rham and the $\ell$-adic 
realizations of $\Phi(L,\delta): M \to M^*,$ with their comparison isomorphisms. 
This furnishes the motivic origin of some linear morphisms between 1-motives and their Cartier duals (here \emph{motivic} means coming from geometry - see \cite{D}). In this setting, an ancestor of this paper is 
 \cite{B09} where the first author defines the notion of biextensions of 1-motives and shows that such biextensions furnish bilinear morphisms between 1-motives in the Hodge, the De Rham and the $\ell$-adic 
realizations. Just as biextensions of 1-motives are the motivic origin of bilinear morphisms between 1-motives, line bundles on a 1-motive $M$ are the motivic origin of some linear morphisms between $M$ and its Cartier dual $M^*.$
As observed in Remark \ref{PhiNotSurjective} not all morphisms from $M$ to $M^*$ are defined by line bundles.


\section{Notation}

 Let $\bS$ be a site. For the definitions of $\bS$-stacks and the related 
vocabulary we refer to~\cite{Giraud}. By a stack we 
always mean a stack in groupoids. If $\cX$ and $\cY$ are two $\bS$-stacks, 
$\cHom_{\bS-stacks}(\cX,\cY)$ will be the $\bS$-stack such that 
 for any object $U$ of $\bS$, $\cHom_{\bS-stacks}(\cX,\cY)(U)$ is the category 
of morphisms of $\bS$-stacks from ${\cX}_{|U}$ to ${\cY}_{|U}$. If $S$ is a scheme, an~$S$-stack will be a 
stack for the \emph{fppf} topology.

A \emph{commutative group $\bS$-stack} is an $\bS$-stack $\cG$ endowed with a 
functor $ +: \cG \times_{\bS} \cG \rightarrow 
\cG, ~~(a,b) \mapsto a+b$, and two natural isomorphisms of associativity 
$\sigma$ and of commutativity $\tau$,
such that for any object $U$ of $\bS$, $(\cG (U),+,\sigma, \tau)$ is a strictly 
commutative Picard category.
 An \emph{additive functor} $(F,\sum):\cG_1 \rightarrow \cG_2 $
between two commutative group $\bS$-stacks is a morphism of ${\bS}$-stacks $F: 
\cG_1 
\rightarrow \cG_2$ endowed with a natural isomorphism $\sum: F(a+b) \cong 
F(a)+F(b)$ (for all $a,b \in \cG_1$) which is compatible with the natural 
isomorphisms $\sigma$ and $\tau$ underlying 
$\cG_1$ and $\cG_2$.  A \emph{morphism of additive functors $u:(F,\sum) 
\rightarrow (F',\sum') $} is an ${\bS}$-morphism of cartesian ${\bS}$-functors 
(see \cite[Chp I 1.1]{Giraud}) which is compatible with the natural isomorphisms 
$\sum$ and $\sum'$ of $F$ and $F'$ respectively. For more information about 
commutative group stacks we refer to~\cite[\S 1.4]{SGA4} or~\cite{Brochard14}.

Let $\cD^{[-1,0]}(\bS)$ denote the subcategory of the derived category 
of abelian sheaves on $\bS$
 consisting of complexes $K$ such that ${\HH}^i (K)=0$ for $i \not= -1$ or~$0$. 
Denote by ${\bPicard}(\bS)$ the category whose objects are commutative group 
stacks and whose arrows are isomorphism classes of additive functors. In~\cite[ 
\S 1.4]{SGA4} Deligne constructs an equivalence of categories
\begin{equation}
\label{st}
 \st: \cD^{[-1,0]}(\bS) \longrightarrow  {\bPicard}(\bS).
\end{equation}
We denote by $[\,\,]$ the inverse equivalence of $\st$. Via this equivalence of 
categories to each 1-motive $M$ is associated a commutative group $S$-stack $\st(M)$ 
and morphisms of 1-motives correspond to additive functors between the 
corresponding 
commutative group stacks. 

We will denote by $B\GG_m$ the classifying $\bS$-stack of $\GG_m$, i.e. the 
commutative group $\bS$-stack such that for any object $U$ of $\bS$, 
$B\GG_m(U)$ is 
the category of $\GG_m$-torsors over $U$. Remark that $[B\GG_m]=\GG_m [1]$ 
where $\GG_m [1]$ is the complex with the multiplicative sheaf $\GG_m$ in degree 
-1. If $\cG$ and $\cQ$ are two commutative group stacks, 
${\cHom}(\cG,\cQ)$ will be the commutative group $\bS$-stack such that 
 for any object $U$ of $\bS$, ${\cHom}(\cG,\cQ)(U)$ is the category whose 
objects are additive functors from ${\cG}_{|U}$ to ${\cQ}_{|U}$ and whose 
arrows are morphisms of additive functors. We have that 
$ [{\cHom}(\cG,\cQ)] = \tau_{\leq 0}{\R}{\Hom}\big([\cG],[\cQ]\big) $, where 
$\tau_{\leq 0}$ is the good truncation in degree 0. \emph{The dual 
$D(\cG)$ of a commutative group stack} $\cG$ is the commutative group stack 
${\cHom}(\cG,B\GG_m)$. In particular $[D(\cG)] = \tau_{\leq 
0}{\R}{\Hom}\big([\cG],\GG_m[1]\big).$ Note that the Cartier duality 
of 1-motives coincides with the duality for commutative group stacks via the equivalence $\st$, i.e. 
$D(\st(M))\simeq \st(M^*)$, where $M^*$ is the Cartier dual of the 1-motive $M$ (see \cite[(10.2.11)]{D1}).

Let $S$ be an arbitrary scheme. An \emph{abelian $S$-scheme} $A$ is an $S$-group scheme which is smooth, proper 
over $S$ and with connected fibers. An \emph{$S$-torus} $\tore$ is an $S$-group 
scheme which is locally isomorphic for the fpqc topology (equivalently for the 
\'etale topology) to an $S$-group scheme of type ${\GG}_m^r$ (with~$r$ a 
nonnegative integer and ${\GG}_m^0$ the trivial torus).
If $G$ is an $S$-group scheme, we denote by $G^D$ the $S$-group scheme 
${\uHom}(G,{\GG}_m)$ of group homomorphisms from $G$ to $\Gm$. 
If $T$ is an $S$-torus, then 
$\tore^D$ is an $S$-group scheme which is locally
for the \'etale topology a constant group scheme defined by a finitely 
generated free $\ZZ$-module.


\section{Line bundles on 1-motives}
\label{section:line_bundles}

Let $S$ be a scheme. The following definition is directly inspired from~\cite[p. 
64]{Mumford65}.

\begin{definition}\label{def-linebundle-stack}
Let $p : \cX \to S$ be an $S$-stack.
\begin{enumerate}
 \item A \emph{line bundle} $\cL$ on $\cX$ consists of 
\begin{itemize}
	\item for any $S$-scheme $U$ and any object $x$ of $\cX(U)$, 
a line bundle $\cL(x)$ on $U$;
	\item for any arrow $f:y \rightarrow x$ in $\cX$, an isomorphism 
$\cL(f): \cL(y) \rightarrow p(f)^*\cL(x)$
	of line bundles on $U$ verifying the following compatibility: if $f:y 
\rightarrow x$ and $g:z \rightarrow y$ are two arrows of $\cX$, then $\cL(f 
\circ g) = p(g)^* \cL(f) \circ \cL(g)$.
\end{itemize}
\item A \emph{morphism} $F : 
\cL_1 \rightarrow \cL_2$ of line bundles over $\cX$ consists of a morphism of 
line bundles $F(x): \cL_1(x) \rightarrow \cL_2(x)$ for any $S$-scheme $U$ 
and for any object $x$ of $\cX(U)$,
such that $p(f)^*F(x)\circ \cL_1(f)= \cL_2(f)\circ F(y)$
for any arrow $f : y \rightarrow x$ in $\cX$.
\end{enumerate}
\end{definition}

\noindent
The usual tensor product of line bundles over schemes 
extends to stacks and allows us to define the tensor product $\cL_1 \otimes \cL_2$
 of two line bundles $\cL_1$ and $\cL_2$ on the stack $\cX$. 
This tensor product 
equips the set of isomorphism classes of line bundles on $\cX$ with an abelian 
group law.
Using the equivalence of categories~\cite[ \S 1.4]{SGA4} between 1-motives 
and commutative group stacks, we can then define line bundles on 1-motives as 
follows.

\begin{definition}\label{def-linebundle-1-motive}
Let $M$ be a 1-motive defined over $S$.
\begin{enumerate}
	\item The category $\PIC(M)$ of line bundles on $M$ is the category of 
line bundles on $\st(M)$.
  \item The \emph{Picard group of} $M$, denoted by $\Pic(M)$, is the group of 
isomorphism classes of line bundles on $\st(M)$.
\end{enumerate}
\end{definition}

The following lemma will allow us to describe line bundles on a 1-motive $M=[X 
\stackrel{u}{\to} G]$ as 
line bundles on $G$ endowed with an action of $X$ that is compatible 
with the translation action of $X$ on $G$.

\begin{lemma}\label{lem:equiv-def}
Let $\iota : \cX_0 \to \cX$ be a representable morphism of stacks over $S$. 
Assume 
that $\iota$ is faithfully flat, and quasi-compact or locally of finite 
presentation. Then the category of line bundles on $\cX$ is equivalent to the 
category of line bundles on $\cX_0$ with descent data, that is to the category 
whose objects are pairs $(\cL,\delta)$ where $\cL$ is a line 
bundle on $\cX_0$ and $\delta : q_1^*\cL \to q_2^*\cL$ is an isomorphism such 
that, up to canonical isomorphisms, $p_{13}^*\delta=p_{23}^*\delta\circ 
p_{12}^*\delta$ (with the obvious notations for the 
projections $q_i : \cX_0 \times_{\cX} \cX_0 \to \cX_0$ and $p_{ij} : \cX_0 
\times_{\cX} \cX_0 \times_{\cX} \cX_0 \to \cX_0 \times_{\cX} \cX_0$).
\end{lemma}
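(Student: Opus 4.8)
The plan is to reduce the claim to ordinary fppf descent theory for quasi-coherent modules, applied on the big fppf site, and then observe that the ``line bundle'' condition is local and hence descends. First I would unwind Definition~\ref{def-linebundle-stack}: a line bundle $\cL$ on a stack $\cX$ is precisely a rank-one locally free module over the structure sheaf of the big fppf site of $\cX$ (a cartesian section $U\mapsto\cL(x)$ assigning to each pair $(U,x)$ an invertible $\cO_U$-module, together with the coherence isomorphisms $\cL(f)$, which is exactly the datum of a quasi-coherent sheaf on the topos of $\cX$ whose pullback to every $S$-scheme mapping in is invertible). Under this dictionary, a morphism of line bundles is a morphism of the corresponding sheaves, the tensor product matches, and the whole category $\PIC(\cX)$ is a full subcategory of $\mathrm{QCoh}(\cX)$ (or even of $\cO_\cX\text{-Mod}$) stable under tensor and duals.

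Next I would invoke descent along $\iota:\cX_0\to\cX$. Since $\iota$ is representable and faithfully flat, and quasi-compact or locally of finite presentation, it is an fppf covering in the topos-theoretic sense, so $\iota^*:\mathrm{QCoh}(\cX)\to\mathrm{QCoh}(\cX_0)$ together with the two projections $q_1,q_2$ and the triple overlap $p_{12},p_{13},p_{23}$ realizes $\cX$ as the ``quotient'' of the groupoid $\cX_0\times_\cX\cX_0\rightrightarrows\cX_0$. The key input is the classical statement that quasi-coherent sheaves satisfy fppf descent: the functor sending a quasi-coherent sheaf $\cM$ on $\cX$ to the descent datum $(\iota^*\cM,\delta_{\mathrm{can}})$, where $\delta_{\mathrm{can}}:q_1^*\iota^*\cM\to q_2^*\iota^*\cM$ is the canonical isomorphism coming from $\iota q_1=\iota q_2$, is an equivalence onto the category of quasi-coherent sheaves on $\cX_0$ equipped with a cocycle isomorphism $\delta$. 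One must check that the cocycle identity $p_{13}^*\delta=p_{23}^*\delta\circ p_{12}^*\delta$ stated in the lemma is exactly the descent cocycle condition; this is a matter of carefully matching the three projections $\cX_0\times_\cX\cX_0\times_\cX\cX_0\to\cX_0\times_\cX\cX_0$ with the source, target and composition maps of the groupoid, and tracking the canonical isomorphisms ``up to'' which the equality is asserted.

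It then remains to promote this equivalence of $\mathrm{QCoh}$ categories to an equivalence of the subcategories of line bundles. Here I would argue that being a line bundle is a property that can be tested after pullback along the covering $\iota$: if $\cM$ is a quasi-coherent sheaf on $\cX$ and $\iota^*\cM$ is a line bundle on $\cX_0$, then for every $S$-scheme $U$ and object $x\in\cX(U)$ the module $\cM(x)$ becomes invertible after the faithfully flat base change induced by pulling back $x$ along $\iota$ (using that $\iota$ is representable, the fiber product $U\times_{\cX}\cX_0$ is an algebraic space, fppf over $U$), and invertibility of a quasi-coherent module descends along fppf maps of schemes (or algebraic spaces). Hence $(\iota^*\cL,\delta)$ with $\iota^*\cL$ a line bundle and $\delta$ a cocycle comes from a line bundle on $\cX$, and conversely. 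Combined with full faithfulness of $\iota^*$ on $\mathrm{QCoh}$, this yields the desired equivalence of categories and the compatibility with tensor products follows from the monoidal nature of pullback and of descent.

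The main obstacle I anticipate is not conceptual but bookkeeping: making precise the phrase ``up to canonical isomorphisms'' in the cocycle condition, i.e.\ carefully identifying the various base-change isomorphisms $p_{ij}^*q_k^*\cong(\text{composite projection})^*$ so that the $2$-cocycle condition in the lemma genuinely coincides with the one in fppf descent, and ensuring that the stacky (rather than schematic) nature of $\cX$ and $\cX_0$ does not cause trouble --- which it does not, precisely because $\iota$ is assumed representable, so all the relevant fiber products $\cX_0\times_\cX\cdots\times_\cX\cX_0$ are algebraic spaces over $\cX_0$ and the usual descent machinery for quasi-coherent sheaves applies verbatim on the associated topoi. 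I would also remark that the hypotheses ``faithfully flat, and quasi-compact or locally of finite presentation'' are exactly what is needed for $\iota$ to define an fppf covering of the topos of $\cX$, which is the only place these hypotheses are used.
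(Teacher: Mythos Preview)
Your approach is correct, but the paper takes a shorter and somewhat different route. Rather than passing through the category of quasi-coherent sheaves and invoking fppf descent in that generality, the paper exploits directly the \emph{definition} of a line bundle on a stack (Definition~\ref{def-linebundle-stack}) as a compatible collection of line bundles on schemes $U$ indexed by objects $x\in\cX(U)$. Given this, the proof reduces immediately to the base-changed situation $\iota_U:\cX_0\times_{\cX}U\to U$ for each scheme $U$ mapping to $\cX$; since $\iota$ is representable, $\cX_0\times_{\cX}U$ is an algebraic space and the descent statement over the algebraic target $U$ is classical (the paper cites \cite[(13.5)]{LMB}). The global statement then follows because both sides --- line bundles on $\cX$, and line bundles on $\cX_0$ with descent data --- are, by definition, built out of the scheme-indexed data.

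What your approach buys is a cleaner conceptual picture (line bundles sit inside $\mathrm{QCoh}$, and descent for $\mathrm{QCoh}$ is a single theorem), at the cost of having to set up quasi-coherent sheaves on arbitrary, not necessarily algebraic, $S$-stacks and justify descent in that setting. The paper's approach sidesteps this entirely by never leaving the world of schemes, which is possible precisely because the paper's Definition~\ref{def-linebundle-stack} is already ``pointwise'' in schemes. One small slip in your write-up: the fiber products $\cX_0\times_{\cX}\cdots\times_{\cX}\cX_0$ are representable \emph{over} $\cX_0$, but are not algebraic spaces in any absolute sense unless $\cX_0$ itself is; this does not actually damage your argument, but the phrasing should be adjusted.
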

\begin{proof}
We have to prove that the pullback functor~$\iota^*$ from the category of line 
bundles on~$\cX$ to the category of line bundles on $\cX_0$ with descent data 
is an equivalence. The result is well-known if $\cX$ is algebraic, see 
\cite[(13.5)]{LMB}. Hence, for 
any $S$-scheme $U$ and any morphism $x : U\to \cX$, the statement is known for 
the morphism $\iota_U : \cX_0 \times_{\cX} U \to U$ obtained by base 
change. Since a line bundle 
on $\cX$ is by definition a collection of line bundles on the various schemes 
$U$, the general case follows. 
\end{proof}

Let $M=[X \stackrel{u}{\to} G]$ be a 1-motive over a scheme $S$.
 By \cite[(3.4.3)]{LMB} the associated commutative group stack $\st(M)$ is 
isomorphic to the quotient stack $[G/X]$ (where $X$ acts on~$G$ via the given 
morphism $u : X\to G$). Note that in general it is not 
algebraic in the sense of \cite{LMB} because it is not quasi-separated. However 
the quotient map $\iota : G \to [G/X]$ is representable, étale and surjective, 
and the above lemma applies. The fiber product $G\times_{[G/X]}G$ is 
isomorphic to $X\times_S G$. Via this identification, the projections 
$q_i:G\times_{[G/X]}G \to G $ (for $i=1,2$)
correspond respectively to the second projection $p_2: X\times_S G \to G $ 
and to the map $\mu: X\times_S G \to G$ given by the action $(x,g)\mapsto 
u(x)g$. We can further identify the fiber 
product $G\times_{[G/X]}G\times_{[G/X]}G$ with $X\times_S X\times_S G$ and the 
partial projections $p_{13}, p_{23}, p_{12}: G\times_{[G/X]}G\times_{[G/X]}G 
\to G\times_{[G/X]}G$ 
respectively with the map $m_X\times \id_G :  X\times_S X \times_S G \to 
X\times_S G$ where $m_X$ denotes the group law of $X$, the map $\id_X\times \mu 
: X\times_S X \times_S G \to X\times_S 
G$, and the partial projection $p_{23}': X\times_S X \times_S G \to X\times_S 
G$.
 Hence by Lemma \ref{lem:equiv-def} the category of line 
bundles on $M$ is equivalent to the category of couples 
$$(L, \delta)$$ 
where $L$ is a line bundle on $G$ and $\delta$ is a 
descent datum for $L$ with respect to $\iota : G\to [G/X]$.
More explicitly, the descent datum $\delta$ is an isomorphism 
$\delta: p_2^* L \rightarrow \mu^*L$ of line bundles on~$X\times_S G$
satisfying the cocycle condition
$$(m_X\times id_G)^* \delta=\big((\id_X\times
\mu)^*\delta\big) \circ \big((p_{23}')^* \delta\big).$$
It is often convenient to describe line bundles in terms of ``points''. If $g$ 
is a point of $G$, i.e. a morphism $g : U\to G$ for some $S$-scheme $U$, we 
denote by $L_g$ the line bundle $g^*L$ on $U$. Then~$\delta$ is given by a 
collection of isomorphisms
\[
 \delta_{x,g} : L_g \to L_{u(x)g}
\]
for all points $x$ of $X$ and $g$ of $G$, such that
for all points $x,y$ of $X$ and $g$ of $G$,
\begin{equation}
\label{eq:CC}
 \delta_{x+y,g}=\delta_{x,u(y)g} \circ \delta_{y,g}\, .
\end{equation}
With this description, the pullback functor $\iota^*$ maps a
line bundle $(L, \delta)$ on $M$ to $L$, i.e. $\iota^*$ just forgets
the descent datum. Note for further use that $\iota^*$ is faithful.


\section{D\'evissage of the Picard group of a 1-motive}
\label{section:devissage}

Let us first recall the following global version of 
Rosenlicht's Lemma from \cite[Corollaire VII 
1.2]{Raynaud_Faisceaux_amples_sur}.

\begin{lemma}[Rosenlicht]
\label{lemma:invertible_sections_on_a_sav}
 Let $S$ be a reduced base scheme and let $P$ be a flat 
$S$-group scheme locally of finite presentation. 
Assume that the maximal fibers of $P$ are smooth and 
connected. Let $\lambda : P \to \GG_m$ be a morphism of $S$-schemes.
If $\lambda(1)=1$, then $\lambda$ is a 
group homomorphism.
\end{lemma}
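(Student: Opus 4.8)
The plan is to reduce the statement to the classical Rosenlicht lemma over an algebraically closed field. Consider the defect morphism of $S$-schemes
\[
 h \colon P\times_S P \longrightarrow \GG_m,\qquad (x,y)\longmapsto \lambda(xy)\,\lambda(x)^{-1}\,\lambda(y)^{-1},
\]
so that $\lambda$ is a group homomorphism if and only if $h$ equals the constant morphism $1$. Put $Y=P\times_S P$. Since $\GG_m$ is affine, hence separated, over $S$, the equalizer $E$ of $h$ and of the constant morphism $1\colon Y\to\GG_m$ is a closed subscheme of $Y$, and we are reduced to proving $E=Y$.

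I would first record two properties of $Y\to S$, which is flat and locally of finite presentation because $P\to S$ is. For $s$ a maximal point of $S$ (that is, a generic point of an irreducible component), $\mathcal O_{S,s}$ is a reduced zero-dimensional local ring, hence a field, so $\mathcal O_{S,s}=\kappa(s)$; moreover $Y_s=P_s\times_{\kappa(s)}P_s$ is smooth over $\kappa(s)$, since $P_s$ is smooth by hypothesis, hence $Y_s$ is geometrically reduced. Because the associated points of $Y$ lie over the associated (equivalently, the maximal) points of the reduced scheme $S$, and at such a point $y$ above $s$ one has $\mathcal O_{Y,y}=\mathcal O_{Y_s,y}$ with $Y_s$ geometrically reduced, every associated point of $Y$ has a reduced (indeed zero-dimensional) local ring; a nonzero nilpotent section of $\mathcal O_Y$ would be nonzero at one of these, so $Y$ is reduced. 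Likewise, being flat, $Y\to S$ is generizing, so every minimal point of $Y$ lies over a maximal point of $S$; as the minimal points are dense in $Y$, the union $\bigcup_s Y_s$ over the maximal points $s$ of $S$ is dense in $Y$.

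Next I would treat the maximal fibres. Fix a maximal point $s$ of $S$. The fibre $P_s$ is smooth, and it is connected with the identity as a $\kappa(s)$-rational point, hence geometrically connected; being smooth it is geometrically reduced, so $P_{\overline{\kappa(s)}}$ is an integral group scheme over $\overline{\kappa(s)}$. From $\lambda(1)=1$ we get $\lambda_{\overline{\kappa(s)}}(1)=1$, and the classical Rosenlicht lemma — an invertible function on a smooth connected group variety over an algebraically closed field that takes the value $1$ at the identity is a character — shows that $\lambda_{\overline{\kappa(s)}}$ is a group homomorphism, i.e.\ the defect morphism $h_{\overline{\kappa(s)}}$ is the constant $1$. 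Since $\overline{\kappa(s)}/\kappa(s)$ is faithfully flat and $\GG_m$ is separated, descent yields $h|_{Y_s}=1$, that is $Y_s\subseteq E$.

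Finally, by the previous step $E$ contains the dense subset $\bigcup_s Y_s$ of $Y$, so $E$ is dense; being closed it then has the same underlying space as $Y$, and since $Y$ is reduced this forces $E=Y$. Hence $h=1$ and $\lambda$ is a group homomorphism. The only genuinely non-formal ingredient is the classical Rosenlicht lemma over an algebraically closed field, which I would quote rather than reprove; the step requiring care is the reduction to the maximal fibres, which uses crucially that $S$ is reduced — this is what makes the local rings of $S$ at its maximal points into fields, forces $Y$ to be reduced, and lets a computation on the maximal fibres propagate to all of $Y$. Over a non-reduced base the statement genuinely fails: for instance $\lambda\colon\GG_a\to\GG_m$, $x\mapsto 1+\epsilon x^2$, over $\spec k[\epsilon]/(\epsilon^2)$ with $k$ of characteristic $\neq 2$, is not a homomorphism although all its fibres are smooth and connected.
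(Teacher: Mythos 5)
Your proof is correct. Note that the paper itself does not prove this lemma: it is quoted directly from Raynaud, \emph{Faisceaux amples sur les sch\'emas en groupes et les espaces homog\`enes}, Corollaire VII 1.2, so there is no in-text argument to compare against; what you have written is in effect a reconstruction of that result. Your two nontrivial inputs are the right ones. First, the classical Rosenlicht unit theorem over an algebraically closed field, applied to $P_{\overline{\kappa(s)}}$ for $s$ a maximal point of $S$: connectedness together with the rational point $1$ gives geometric connectedness, and smoothness then gives geometric integrality, so the classical statement applies and descends to $\kappa(s)$ by faithful flatness. Second, the propagation from the maximal fibers to all of $Y=P\times_S P$, which uses reducedness of $S$ exactly where it must: it makes $\mathcal{O}_{S,s}=\kappa(s)$ at maximal points, and combined with flatness it shows both that the (weakly) associated points of $Y$ lie over maximal points of $S$ --- so that $Y$ is reduced --- and, via going-down, that $\bigcup_s Y_s$ is dense; a closed equalizer containing a dense subset of a reduced scheme is everything. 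The only place a referee would ask for a precise citation is the statement that associated points of a flat module lie over associated points of the base (EGA IV 3.3.1 in the noetherian case; in general one uses weakly associated points or reduces to the noetherian case by a limit argument, which is available since $P\to S$ is locally of finite presentation). Your counterexample $x\mapsto 1+\epsilon x^{2}$ on $\GG_a$ over $\spec k[\epsilon]/(\epsilon^{2})$ is valid and correctly isolates the role of the reducedness hypothesis.
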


\noindent
\textbf{(I) First d\'evissage coming from the short exact sequence $0 
\rightarrow T \stackrel{i}{\rightarrow} G \stackrel{\pi}{\rightarrow} A 
\rightarrow 0.$}

\begin{proof}[Proof of Proposition \ref{prop:TGA}]
By \cite[Chp I, 
Prop 7.2.2]{Moret_Bailly_Pinceaux}, the category $\CUB(A)$ 
is equivalent to the category of pairs $(L,s)$ where $L$ is 
a cubical line bundle on $G$ and $s$ is a trivialization 
of $i^*L$ in the category $\CUB(T)$. With this 
identification, the pullback functor $\pi^* : \CUB(A) \to \CUB(G)$ is 
the forgetful functor that maps a pair $(L,s)$ to $L$.  But 
since the base scheme is assumed to be normal, all these 
categories of cubical line bundles are equivalent to the 
categories of line bundles rigidified along the unit section 
\cite[Chp I, Prop 2.6]{Moret_Bailly_Pinceaux}. The group of 
isomorphism classes of rigidified line bundles on $G$ is 
isomorphic to $\Pic(G)/\Pic(S)$, and similarly for $A$ 
and~$T$. Hence the equivalence of categories \cite[Chp I, 
Prop 7.2.2]{Moret_Bailly_Pinceaux} induces the following exact 
sequence when we take the groups of isomorphism classes:
\begin{equation}
\begin{array}{c} \displaystyle
\Aut(\cO_G) \stackrel{i^*}{\longrightarrow}  
\Aut(i^*\cO_G) \longrightarrow
\frac{\Pic(A)}{\Pic(S)} \stackrel{\pi^*}{\longrightarrow}  
\frac{\Pic (G)}{\Pic(S)} \stackrel{i^*}{\longrightarrow} 
\frac{\Pic(T)}{\Pic(S)}\, ,\\
\end{array}
\label{eq:devissage1}
\end{equation}
where the automorphism groups on the left are the 
automorphism groups in the categories of rigidified line 
bundles on $G$ and on $T$. An automorphism of $\cO_G$ 
(rigidified) is an automorphism $\lambda : \cO_G\to \cO_G$ 
such that $e^*\lambda=\id$ where $e$ is the unit section of 
$G$. Hence the above group $\Aut(\cO_G)$ identifies with 
the kernel of $e^* : \Gamma(G,\cO_G^*)\to \Gamma(S, 
\cO_S^*)$, i.e. with the group of morphisms of schemes $\lambda : G\to \Gm$ 
such that $\lambda(1)=1$. Since $S$ is reduced, this kernel is 
isomorphic to $\Hom(G, \GG_m)$ by 
Lemma~\ref{lemma:invertible_sections_on_a_sav}. Similarly,
the group $\Aut(i^*\cO_G)$ of automorphisms in the category 
of rigidified line bundles is isomorphic to $\Hom(T, 
\GG_m)$. Moreover, since $\Hom(A, \GG_m)=0$ the first map 
$i^*$ is injective.
\end{proof}

\begin{remark}
\label{rem:vanishing_Pic(T)}
(1) 
Over any base scheme $S$, by 
\cite[Chp I, Prop 7.2.1]{Moret_Bailly_Pinceaux} the category $\CUB(T)$ is 
isomorphic to the category of extensions of $T$ by $\GG_m$.
Moreover, by \cite[Chp I, Remark 7.2.4]{Moret_Bailly_Pinceaux}, if we assume 
the base scheme $S$ to be normal, or geometrically unibranched, or local 
henselien, then the group $\Ext^1(T, \GG_m)$ vanishes if the torus $T$ is split.

(2) If $L$ is a rigidified line bundle on $G$, the class of the line bundle
 $i^*L$ in $\Pic(T) / \Pic(S)$ represents the 
obstruction to the fact that $L$ comes from a rigidified line bundle over~$A$. 
Since $\Pic(T) / \Pic(S)\simeq \Ext^1(T, \GG_m)$ and since the tori underlying 
1-motives are split locally for the \'etale topology, as a consequence of 
(1) of this Remark we have that if $S$ is normal,
 there exists an \'etale and surjective morphism $S'\to S$ such that  
$(i^*L)_{|_{S'}}=0$, i.e. after a base change to $S'$, the rigidified line 
bundle $L$ on $G$ comes from $A$.
\end{remark}

\noindent 
\textbf{(II) Second d\'evissage coming from the exact sequence $0 
\rightarrow G \stackrel{\iota}{\rightarrow} M \stackrel{\beta}{\rightarrow} 
X[1] 
\rightarrow 0.$} \bigskip

Let us describe
more explicitly the maps $\iota^* : \Pic(M) \to \Pic(G) $ and $\beta^*:  \Pic(X[1]) \to \Pic(M)$ in terms of line
bundles with descent data. As explained in \S\ref{section:line_bundles}, we 
identify the category of line bundles on $M$
with the category of couples 
$$(L, \delta)$$
where $L$ is a line bundle on $G$ and $\delta$ is a descent
datum for $L$ with respect to the covering $\iota: G\to [G/X]$.
Then the pullback functor~$\iota^*$ maps a
line bundle $(L, \delta)$ on $M$ to $L$: $\iota^*(L, \delta) =  L.$

If $L$ is the trivial bundle $\cO_G$, via the canonical 
isomorphism $p_2^*L\simeq \mu^*L$, a descent datum $\delta$ on $L$ can 
be seen as a
morphism of $S$-schemes $\delta : X\times_S G \to \GG_m$, and the
cocycle condition (\ref{eq:CC}) on $\delta$ can be rewritten as follows:
for any points $x,y$ of $X$ and $g$ of $G$, we have the equation
\begin{equation}
\label{CC}
 \delta(x+y,g)=\delta(x,
u(y)g).\delta(y , g)\, .
\end{equation}

The category of line bundles on $X[1]$ is equivalent
to the category of line bundles on $S$ together with a
descent datum with respect to the presentation $S\to [S/X]$. 
By \cite[Example 5.3.7]{Brochard09} we have that 
$$ \frac{\Pic(X[1])}{\Pic(S)}\simeq \Hom(X, \GG_m). $$ 

Let us now describe the pullback morphism
$\beta^*$ in these terms.
Unwinding the various definitions, it can be seen that given
a character $\alpha : X\to \GG_m$,  the associated
element $\beta^*\alpha \in \Pic(M)$ is the class of the
line bundle $(\cO_G, \delta_{\alpha})$ where
$\delta_{\alpha}$ is the automorphism of $\cO_{X\times_S G}$
corresponding to the morphism of $S$-schemes 
$\delta_{\alpha} : X\times_S G\to \GG_m,\,\,  (x,g)  \mapsto \alpha(x)$:
$$ \beta^*\alpha =[(\cO_G, \delta_{\alpha})].$$ 

Even if the composition $\iota^*\beta^*$
is trivial, the sequence
\[\Pic(X[1]) \to \Pic(M) \to \Pic(G)
\]
is not exact in 
general as shown in the 
following example. However, in the special case of 1-motives without toric part, this sequence is always exact (see Remark \ref{remark:cas:A}).

\begin{example}
\label{sequence_not_exact}
Let $S$ be any base scheme with $\Pic(S)=0$. Let $T$ be an
$S$-torus, let $X=\ZZ$ and let $M=[u : X \to T]$ be a 1-motive with $u$ the 
trivial
morphism. Let $(\cO_T, \delta)$ be a line bundle on $M$
(using the above description) that is mapped to the neutral element of 
$\Pic(T)$. Note that since $u$ is trivial the cocycle condition \eqref{CC}
here means that for any $g\in T(U)$, $\delta(.,g)$
is a group homomorphism in the variable $x$. 

The class
of $(\cO_T, \delta)$ is in the image of $\Pic(X[1])$
if and only if there is an $\alpha\in \Hom(X,\GG_m)$
such that $(\cO_T, \delta)\simeq (\cO_T, \delta_{\alpha})$.
An isomorphism
$(\cO_T, \delta)\simeq (\cO_T, \delta_{\alpha})$
is an automorphism $\lambda$ of $\cO_T$ such that
$\delta_{\alpha}\circ p_2^*\lambda=\mu^*\lambda\circ \delta$.
But here $\mu=p_2$ (since $u$ is trivial) and the group
of automorphisms of $\cO_{X\times_S T}$ is commutative.
So $(\cO_T, \delta)$ and $(\cO_T, \delta_{\alpha})$
are isomorphic if and only if $\delta=\delta_{\alpha}$.
This proves that $(\cO_T, \delta)$ is in the image of
$\Pic(X[1])$ if and only if $\delta$, seen as a morphism of $S$-schemes
$\delta : X\times_S T \to \GG_m$, is constant
in the variable $g\in T$ (for the ``if'' part, we define
$\alpha$ by $\alpha(x)=\delta(x,1)$ and the cocycle condition on $\delta$ 
ensures that
$\alpha$ is a group homomorphism). We will now construct
a descent datum $\delta$ on $\cO_T$ which is not constant
in $g$ and this will prove that the sequence
$\Pic(X[1]) \to \Pic(M) \to \Pic(T)$ is not exact.
Let $\lambda \in \Hom(T, \GG_m)$ be a non trivial
homomorphism and define $\delta$ functorially by
$\delta(n,g)=\lambda(g)^n$. This $\delta$ is a homomorphism
in the variable $n$ for any $g$ and so it is indeed a descent
datum, but it is non constant in $g$ since $\lambda$
is non constant. Hence the corresponding line bundle $(\cO_T, \delta)$
is not in the image of $\Pic(X[1])$.  
\end{example}

Now we compute the kernel of $\iota^*: \Pic(M)\to \Pic(G).$
Denote by $G^{D} = \underline{\Hom}(G, \GG_m)$ and $X^{D} = \underline{\Hom}(X, 
\GG_m)$ the Cartier duals of $G$ and $X$, respectively. (Note that calling 
$G^D$ the ``Cartier dual'' of $G$ is a slight abuse here since $G^{DD}$ does 
not need to be isomorphic to $G$. For instance if $G$ is an abelian scheme then 
$G^D=0$.)

\begin{lemma}\label{lem:equivalentconditions}
For a morphism of $S$-group schemes $\lambda : X\to G^{D}$, the 
following conditions are equivalent:
\begin{enumerate}
 \item For any $S$-scheme $U$ and any two points $x,y \in 
X(U)$, $\lambda(x)(u(y))=\lambda(y)(u(x))$.
\item The following diagram commutes
\begin{equation}\label{diagramme_equivalentconditions}
\xymatrix{
   X \ar[rr]^{\lambda} \ar[d]_u && G^{D} \ar[d]^{u^{D}} \\
   G \ar[r]^-{ev} & G^{DD} \ar[r]^{\lambda^{D}} & X^{D},
}
\end{equation}
where $ev : G \to (G^{D})^{D}$ is the canonical morphism that maps  $g\in 
G(U)$ to $ev_g : G^D \to \Gm, \varphi \mapsto \varphi(g)$, and  where 
$u^{D}$ (resp. $\lambda^{D}$) is the morphism of group schemes given by 
$\varphi 
\mapsto \varphi\circ u$ (resp. $\varphi \mapsto \varphi\circ \lambda$).
\end{enumerate}
\end{lemma}

\begin{proof} 
For any $S$-scheme $U$ and any $x,y\in X(U)$, we have
\begin{align*}
 u^D(\lambda(x))(y) &= (\lambda(x)\circ u)(y)\\
&= \lambda(x)(u(y))
\end{align*}
and
\begin{align*}
 (\lambda^D\circ ev\circ u)(x)(y) &= (\lambda^D(ev_{u(x)}))(y) \\
&= (ev_{u(x)}\circ \lambda) (y) \\
&=\lambda(y)(u(x))
\end{align*}
\end{proof}

We say that a morphism of $S$-schemes $\sigma : X\times_S X \to 
\GG_m$ is \emph{symmetric} if it satisfies the equation 
$\sigma(x,y)=\sigma(y,x)$. If $\alpha : X\to \GG_m$ is a 
morphism of $S$-schemes, we denote by $\sigma_{\alpha} : X\times_S X 
\to \GG_m$ the symmetric morphism given by 
$\sigma_{\alpha}(x,y)=\frac{\alpha(x+y)}{\alpha(x)\alpha(y)}$. 
Hence $\alpha$ is a morphism of $S$-group schemes if and only if 
$\sigma_{\alpha}$ is trivial. 

\begin{definition}\label{Lambda}
\begin{enumerate}
 \item We denote by $\Lambda$ the subgroup of $\Hom(X, G^{D})$ 
consisting 
of those morphisms of $S$-group schemes that satisfy the
equivalent conditions of Lemma \ref{lem:equivalentconditions}. 
\item We denote by $\Sigma$ the quotient of the group of 
symmetric bilinear morphisms $X\times_S X \to \GG_m$ by the subgroup 
of morphisms of the form $\sigma_{\alpha}$ for some morphism of
$S$-schemes~$\alpha : X \rightarrow \GG_m$.
\item We denote by $\Psi: \Lambda \to\Sigma$ the natural homomorphism 
that maps a $\lambda\in \Lambda$ to the class of the function $(x,y)\mapsto 
\lambda(x)(u(y))$.
\end{enumerate}
\end{definition}

\begin{remark}\label{Sigma}
Note that, following \cite[XIV, \S2 to \S4]{Cartan_Eilenberg} we can view 
$\Sigma$ as a subgroup of the kernel of the natural morphism $\Ext^1(X, \Gm) 
\to H^1(X,\Gm)$. Since the framework and statements of \cite{Cartan_Eilenberg} 
are not exactly the same as ours, we briefly recall the construction here. If 
$\sigma : X\times_S X \to \Gm$ is a symmetric bilinear morphism, let 
$E_{\sigma}$ be the group scheme $\Gm\times_S X$, where the group law is given 
by $(\gamma_1, x).(\gamma_2,y):=(\gamma_1\gamma_2\sigma(x,y), x+y)$. With the 
second projection $\pi : E_{\sigma} \to X$ and the inclusion $i : \Gm \to 
E_{\sigma}$ given by $i(\gamma)=(\gamma,0)$, the group scheme $E_{\sigma}$ is a 
commutative extension of $X$ by $\Gm$. Then a direct computation shows that 
$\sigma \mapsto E_{\sigma}$ induces an injective group homomorphism from 
$\Sigma$ to $\Ext^1(X,\Gm)$. Since the projection $\pi : E_{\sigma} \to X$ has 
a section $x\mapsto (1,x)$, the $\Gm$-torsor over $X$ induced by $E_{\sigma}$ 
is trivial, which proves that the image of $\Sigma$ lies in the kernel of 
$\Ext^1(X,\Gm)\to H^1(X,\Gm)$. Actually, if $E$ is an extension of $X$ by 
$\Gm$, its class $[E]\in \Ext^1(X,\Gm)$ lies in $\Sigma$ if and only if the 
projection $E\to X$ has a section $s : X\to E$ (only as a morphism of schemes, 
not of group schemes) which is of degree 2 in the language 
of~\cite{Breen_Fonctions_Theta} or \cite{Moret_Bailly_Pinceaux}, i.e. such that 
$\theta_3(s)=1$.
\end{remark}

\begin{remark}
 \label{annulation_Sigma}
In particular, if $X$ is split (that is, $X\simeq \ZZ^r$ for some $r$) then 
$\Sigma=0$ since the morphism $\Ext^1(X, \Gm) 
\to H^1(X,\Gm)$ is injective.
\end{remark}

For the rest of this Section, we assume that the base scheme~$S$ is reduced. 
Denote by~$K$ the kernel of the forgetful functor $\iota^*: \Pic(M) \to 
\Pic(G)$. This kernel is the group of classes of 
pairs $(\cO_G, \delta)$, where $\delta$ is a descent datum on 
$\cO_G$. Such a descent datum can be seen as a morphism of schemes $\delta : 
X\times_S G\to \GG_m$ that satisfies the cocycle condition \eqref{CC}. Two 
pairs 
$(\cO_G, 
\delta_1), (\cO_G,\delta_2)$ are in the same class if and only if 
they are isomorphic in the category of 
line bundles on $G$ equipped with a descent datum relative 
to $\iota : G \to M$, which means that there is a 
morphism of $S$-schemes $\nu : G\to \GG_m$ such that
$(\mu^*\nu).\delta_1=\delta_2.p_2^*\nu$
where $\mu, p_2 : X\times_SG\to G$ are the action of $X$ on $G$
and the second projection. The latter equation can be rewritten as
$\nu(u(x)g)\delta_1(x,g)=\delta_2(x,g)\nu(g)$
for any $(x,g) \in X(U)\times G(U)$. Replacing $\nu$ with 
 $g\mapsto \nu(g)/\nu(1)$, we may 
assume that $\nu(1)=1$ so that $\nu$ is a group 
homomorphism by Rosenlicht's Lemma \ref{lemma:invertible_sections_on_a_sav}. 
The equation then 
becomes 
\begin{equation}
\nu(u(x))\delta_1(x,g)=\delta_2(x,g).
\label{eq:descriptionK-1}
\end{equation}
 The group law on $K$ is given 
by $[(\cO_G, \delta_1)].[(\cO_G, 
\delta_2)]=[(\cO_G, \delta_1.\delta_2)]$.

We will now construct a homomorphism  
$\Theta: K\to \Lambda$, where $\Lambda$ was defined in Definition \ref{Lambda}. Let $[(\cO_G, \delta)]$ be a class in $K$ where $\delta$
is a solution of \eqref{CC}. For any point 
$x$ of $X$, consider the morphism of $S$-schemes  
\begin{equation}
\lambda_\delta(x) : G\to \GG_m, \,\, g \mapsto 
\frac{\delta(x,g)}{\delta(x,1)}.
\label{eq:descriptionK-2}
\end{equation}
 Since 
$\lambda_\delta(x)(1)=1$, the morphism $\lambda_\delta(x)$ is actually a 
homomorphism by Lemma \ref{lemma:invertible_sections_on_a_sav}, hence a section 
of 
$G^{D}$. This construction is functorial and defines a morphism of 
$S$-schemes 
$\lambda_\delta : X\to G^{D}$. By \eqref{CC}, for any $x,y\in X$ and any 
$g\in G$ 
we 
have
\begin{align*}
 \lambda_\delta(x+y)(g) &= \frac{\delta(x+y,g)}{\delta(x+y,1)} \\
&= 
\frac{\delta(x,u(y)g)\delta(y,g)}{\delta(x,u(y))\delta(y,1)}
\\
&=\frac{\delta(x,u(y)g)}{\delta(x,1)}.
\frac{\delta(x,1)}{\delta(x,u(y))}.
\frac{\delta(y,g)}{\delta(y,1)} \\
&= \frac{\lambda_\delta(x)(u(y)g)}{\lambda_\delta(x)(u(y))}. 
\lambda_\delta(y)(g) \\
&= \lambda_\delta(x)(g).\lambda_\delta(y)(g)
\end{align*}
where the last equality follows from the fact that $\lambda_\delta(x)$ is a
 homomorphism. Hence $\lambda_\delta$ is a morphism of $S$-group schemes. 
Moreover, by \eqref{CC} 
for any $x,y\in X$ we have
\[
 \delta(x,u(y))\delta(y,1)=\delta(x+y,1)=\delta(y+x,1)
=\delta(y,u(x))\delta(x,1).
\]
Hence $\lambda_\delta(x)(u(y))=\lambda_\delta(y)(u(x))$ and so 
$\lambda_\delta$ belongs to $\Lambda$. Since $\lambda_{\delta}$ only 
depends on the class $[(\cO_G, \delta)]$, this construction induces 
a well-defined homomorphism 
\begin{equation}
\Theta: K\to \Lambda, \,\, [(\cO_G, \delta)] \mapsto \lambda_\delta.
\label{eq:Theta}
\end{equation}
 It is a homomorphism because 
$\lambda_{\delta_1\delta_2}=\lambda_{\delta_1}\lambda_{\delta_2}$.

\begin{proof}[Proof of Proposition \ref{prop:devissage_kernel}]
  The morphism $\beta^*: \Hom(X, \GG_m) \rightarrow K$ maps an $\alpha \in 
\Hom(X, \GG_m)$ to the 
class $[(\cO_G, \delta_{\alpha})]$, where $\delta_{\alpha}$ is defined by 
$\delta_{\alpha}(x,g)=\alpha(x)$. By the equality \eqref{eq:descriptionK-1}, 
$[(\cO_G, 
\delta_{\alpha})]$ is trivial if and only if there is a morphism of $S$-group 
schemes 
$\nu : G\to \GG_m$ such that $\alpha=\nu\circ u$, which means that 
the sequence is exact in $\Hom(X,\GG_m)$.

Now we check the exactness in $K$. 
Let $[(\cO_G, \delta)]$ be a class in $K$.
By \eqref{eq:descriptionK-2} its image $\lambda_{\delta}$ under~$\Theta$ 
is trivial if and only if $\delta$ satisfies the equation 
$\delta(x,1)=\delta(x,g)$ for any $x\in X$ and~$g\in G$. If so, let $\alpha : 
X\to \GG_m$ be the morphism of $S$-schemes defined by $\alpha(x)=\delta(x,1)$. 
Then by \eqref{CC} 
$\alpha$ is a homomorphism, and we have $\delta=\delta_{\alpha} = 
\beta^*(\alpha)$, which 
proves the exactness in~$K$.

It remains to prove the exactness in $\Lambda$. Let $\lambda\in \Lambda$. 
Assume that $\lambda$ is in the image of~$K$, i.e. there is some solution 
$\delta$ of \eqref{CC} such that $\lambda=\lambda_{\delta}$. Let $\alpha : 
X\to 
\GG_m$ be the morphism of $S$-schemes defined by $\alpha(x)=\delta(x,1)$. Then 
for 
any $(x,g) \in X \times G$ we have $\delta(x,g)=\lambda(x)(g)\alpha(x)$. 
The bilinearity of $\lambda$ and \eqref{CC} yield
$\lambda(x)(u(y))=\frac{\alpha(x+y)}{\alpha(x)\alpha(y)}$. Hence the image of 
$\lambda$ in $\Sigma$ is trivial. Conversely, assume that the image 
$\Psi(\lambda)$
is trivial in $\Sigma$, in other words there is a morphism of $S$-schemes 
$\alpha : X \to \GG_m$ 
such that $\lambda(x)(u(y))=\frac{\alpha(x+y)}{\alpha(x)\alpha(y)}$. Then we 
define $\delta$ by $\delta(x,g)=\lambda(x)(g)\alpha(x)$ and the same 
computations as above show that $\delta$ satisfies \eqref{CC} and that 
$\lambda=\lambda_{\delta}$, which concludes the proof.
\end{proof}

 If the lattice $X$ underlying the 1-motive $M=[u:X \to G]$ is split then by 
Remark~\ref{annulation_Sigma} the morphism $K\to 
\Lambda$ is surjective. 
Actually we can give an explicit section, that depends on the choice of a 
$\ZZ$-basis for $X$, as follows. Let $e_1, \dots, e_n$ be a $\ZZ$-basis of $X$.
For $\lambda\in \Lambda$, let $\lambda_1, \dots, \lambda_l : G\to \Gm$ 
be the images of $e_1, \dots, e_l$ under $\lambda$. We denote 
by~$\delta_{\lambda}$ the morphism from $X\times_S G$ to $\GG_m$ defined by
\begin{equation}
 \label{def:varphi_lambda}
\delta_{\lambda}(x,g)=\lambda(x)(g)\prod_i 
\left(\lambda_i\circ 
u\left(\frac{n_i(n_i-1)}{2}e_i\right)\right) \prod_{1\leq 
i<j\leq l}
\lambda_i(u(e_j))^{n_in_j}\, .
\end{equation}
for any $S$-scheme $U$, any $x=\sum n_ie_i\in X(U)$ and any 
$g\in G(U)$.

\begin{proposition}\label{prop:devissage_PicM}
Let $M=[u:X \to G]$ be a 1-motive defined over a reduced base scheme $S$.
Assume that the lattice $X$ is split. With the above notations, the 
application $\lambda \mapsto [(\cO_G,\delta_{\lambda})]$ defines a section $s 
: \Lambda \to K$ of the homomorphism $\Theta$ defined in (\ref{eq:Theta}). In particular the group 
$\Pic(M)$ fits in the following exact 
sequence:
\begin{equation}
\Hom(G, \GG_m)
\longrightarrow \Hom(X, \GG_m) \times \Lambda
\longrightarrow \Pic (M)
\stackrel{\iota^*}{\longrightarrow}  \Pic (G)\, .
\label{eq:last}
\end{equation}
\end{proposition}
\begin{proof}
 A direct computation shows that 
$\delta_{\lambda}$ satisfies the equation 
\eqref{CC}, hence it is a descent datum and $s$ is well-defined. From the 
definition of $\delta_{\lambda}$, we see that $\delta_{\lambda.\lambda'}=
\delta_{\lambda}.
\delta_{\lambda'}$ hence $s$ is a group homomorphism. Moreover, the quotient 
$\delta_{\lambda}(x,g)/\delta_{\lambda}(x,1)$ is equal to 
$\lambda(x)(g)$, which proves that 
$\Theta([(\cO_G,\delta_{\lambda})])=\lambda$. 
The 
exact sequence (\ref{eq:last}) now follows from Proposition~\ref{prop:devissage_kernel}.
\end{proof} 

\begin{remark}\label{remark:cas:A} Let $M=[v:X 
\rightarrow A]$ be a 1-motive without toric part.
Since $\underline{\Hom}(A,\GG_m)=0$, the group $\Lambda$ is trivial and so from 
Proposition \ref{prop:devissage_kernel}, we obtain that $ \beta^* :\Hom(X,\GG_m) 
\to K$ is an isomorphism, that is the short sequence defined by $\beta^*$ and $\iota^*$, 
 $  \Pic(X[1]) / \Pic(S) \rightarrow 
\Pic(M)/ \Pic(S) \rightarrow \Pic(A)/ \Pic(S) $, is exact.
\end{remark}

\section{Construction of $\Phi: \Pic(M)/\Pic(S) \to \Hom(M,M^*)$}
\label{section:direct_construction}

Using the d\'evissage of the Picard group of a 1-motive $M$, in this Section we construct the morphism 
$\Phi : \Pic(M)/\Pic(S) \to \Hom(M,M^*)$ of Theorem~\ref{thm:existence_Phi} in an explicit way.

We start proving the following lemma which might be well-known, but for which we were unable to 
find a convenient reference.
\begin{lemma}
 \label{lem:morphisme_du_milieu}
Let $S$ be a reduced base scheme. Consider the following commutative diagram of 
commutative $S$-group schemes 
\[
\xymatrix{
0 \ar[r] &
T    \ar[r]^{i} \ar[d]_h&
G    \ar[r]^{\pi} \ar@{.>}[d]^u&
A    \ar[r] \ar[d]^v& 0 \\
0 \ar[r] &
T'    \ar[r]^{i'} &
G'    \ar[r]^{\pi'} &
A'    \ar[r] & 0
}
\]
where $T, T'$ are tori, $A, A'$ are abelian schemes, all the 
solid arrows are group homomorphisms, the rows are exact, and $u$ is only assumed to be a morphism of schemes over $S$. Then,
\begin{enumerate}
\item $u$ is a group homomorphism.
\item $u$ is uniquely determined by $h$ and $v$, i.e. if $u_1$ and 
$u_2$ are two morphisms that make the whole diagram commutative, then $u_1=u_2$.
 \item if $h=v=0$, then $u=0$.
\end{enumerate}
\end{lemma}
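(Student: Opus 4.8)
The plan is to derive all three statements from Rosenlicht's Lemma~\ref{lemma:invertible_sections_on_a_sav} together with the vanishing $\Hom(A,\GG_m)=0$, the heart of the matter being part~(1); once that is established, parts~(2) and~(3) follow by a short diagram chase.

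For part~(1), I would first record that $u$ preserves unit sections: since $i$ is a group homomorphism one has $e_G=i(e_T)$, whence $u(e_G)=u(i(e_T))=h(e_T)=e_{G'}$, using $u\circ i=h$ and that $h$ is a homomorphism. Then I would introduce the ``defect'' morphism of $S$-schemes
\[
d:G\times_S G\longrightarrow G',\qquad (g_1,g_2)\longmapsto u(g_1+g_2)-u(g_1)-u(g_2),
\]
so that proving~(1) amounts to showing that $d$ is trivial. Composing with $\pi'$ and using $\pi'\circ u=v\circ\pi$ together with the fact that $\pi$, $\pi'$ and $v$ are homomorphisms gives $\pi'\circ d=0$; hence $d$ factors through the closed immersion $i':T'\hookrightarrow G'$ and may be viewed as a morphism $G\times_S G\to T'$ with $d(g,e_G)=d(e_G,g)=e_{T'}$ for every point $g$ of $G$ (again because $u(e_G)=e_{G'}$).

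Next I would work \'etale-locally on $S$ in order to assume $T'$ split, and compose $d$ with each coordinate character $\chi:T'\to\GG_m$. The composite $\chi\circ d:G\times_S G\to\GG_m$ is a morphism of $S$-schemes sending the unit section to~$1$, and $G\times_S G$ --- being a product of two copies of $G$, which is an extension of an abelian scheme by a torus --- is flat and locally of finite presentation over the reduced scheme $S$ with smooth connected fibers. Rosenlicht's Lemma~\ref{lemma:invertible_sections_on_a_sav} then shows that $\chi\circ d$ is a group homomorphism; since $\chi(d(g,e_G))=\chi(d(e_G,g))=1$ and every point of $G\times_S G$ is a product $(g_1,e_G)\cdot(e_G,g_2)$, we conclude $\chi\circ d=1$. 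As the coordinate characters separate the points of the split torus $T'$, the morphism $d$ is trivial; and since this conclusion is \'etale-local on $S$, it holds over $S$. Hence $u$ is a homomorphism of $S$-group schemes.

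For part~(2), let $u_1$ and $u_2$ be two morphisms completing the diagram; by~(1) both are homomorphisms, so $w:=u_1-u_2:G\to G'$ is a homomorphism with $\pi'\circ w=0$ and $w\circ i=h-h=0$. Thus $w$ factors through $i':T'\hookrightarrow G'$, and being trivial on $i(T)$ it further factors through $G/i(T)\cong A$, giving a homomorphism $\bar w:A\to T'$; composing with any character of $T'$ and invoking $\Hom(A,\GG_m)=0$ forces $\bar w=0$, hence $w=0$ and $u_1=u_2$. Part~(3) is then immediate: when $h=v=0$ the zero morphism $G\to G'$ makes the whole diagram commute, so by~(2) it is the unique such morphism, whence $u=0$. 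The one place I expect to have to be careful is the use of Rosenlicht's Lemma in part~(1): one must push the defect $d$ into the torus $T'$ before composing with a character --- there being no useful characters on $G'$ itself --- and check that $G\times_S G$ satisfies the flatness, local-finite-presentation and fiberwise smoothness/connectedness hypotheses of Lemma~\ref{lemma:invertible_sections_on_a_sav}, which it does, these properties being stable under fiber products and under the \'etale localization used to split $T'$.
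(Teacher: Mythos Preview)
Your proof is correct; the ingredients (Rosenlicht's Lemma and $\Hom(A,\Gm)=0$) are the same as the paper's, but the logical order is reversed. You prove (1) first via the defect map $d$, then deduce (2) and (3); the paper instead proves (3) directly, deduces (2) by applying (3) to $u_1-u_2$, and then obtains (1) by applying (2) to the doubled extension $0\to T\times_S T\to G\times_S G\to A\times_S A\to 0$ with the two scheme morphisms $u_1(x,y)=u(x+y)$ and $u_2(x,y)=u(x)+u(y)$. In effect the paper's argument absorbs your defect computation into a single invocation of uniqueness, which is marginally slicker; your route, on the other hand, is more self-contained for (1) and makes explicit the factorization of $d$ through the torus. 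Either way the content is the same.
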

\begin{proof}
 Let us prove (3). Since $\pi'\circ u=0$ the morphism $u$ factorizes through a 
morphism of schemes $u' : G\to T'$. The question is local on $S$, and $T'$ is 
locally isomorphic to $\Gm^r$ for some integer $r$, hence we may assume that 
$T'=\Gm$. Since $u'\circ i$ is trivial, in particular $u'(1)=1$ and so
by Rosenlicht's Lemma~\ref{lemma:invertible_sections_on_a_sav} $u'$ is a group homomorphism. Now 
the result follows since $\Hom(A,\Gm)=0$.

Applying (3) with $u=u_1-u_2$ we get (2). Now let us prove (1). It 
suffices to apply~(2) with the exact sequence $0\to T\times_S T\to G\times_S 
G\to A\times_S A\to 0$ and the morphisms $u_1, u_2 : G\times_S G \to G'$ 
defined 
by $u_1(x,y)=u(x+y)$ and $u_2(x,y)=u(x)+u(y)$.
\end{proof}

Let $S$ be a normal base scheme and let $M=[u: X \rightarrow G]$ 
be a 1-motive over $S$, where $G$ fits in an 
extension $0 \rightarrow T \stackrel{i}{\rightarrow} G 
\stackrel{\pi}{\rightarrow} A 
\rightarrow 0.$  We start recalling from~\cite[(10.2.11)]{D1} the description of the Cartier dual
 $M^*=[u':T^D \rightarrow G']$ of $M$.
Denote by $\ov{M}$ the 1-motive $M/W_{-2}M= [v: X 
 \rightarrow A]$ where $v=\pi\circ u$. An extension of $\ov{M}$ by 
$\Gm$ is a pair $(E,\vt)$, where $E$ is an extension of $A$ by $\Gm$ and $\vt$ 
is a trivialization of $v^*E$:
\[
 \xymatrix{
&&& X\ar_{\vt}@/_1pc/[ld] \ar^v[d] &\\
0\ar[r]&\Gm \ar[r]& E \ar[r]&  A \ar[r]& 0}
\]
Extensions of $\ov{M}$ by $\Gm$ do not admit non trivial automorphisms. The 
functor of isomorphism classes of such extensions is representable by a group scheme
$G'$, which is an extension of $A^*$ by $X^D$:
\[
\xymatrix{
0 \ar[r] & X^D
    \ar[r]^{i'} & G'
    \ar[r]^{\pi'} & A^*
    \ar[r] & 0
}
\]
The 1-motive $M$ is an extension of $\ov{M}$ by $T$. If $\tau : T\to \Gm$ is a 
point of $T^D$, the pushdown $\tau_*M$ is an extension of $\ov{M}$ by $\Gm$, i.e. it is a point of $G'$. 
This defines a morphism $u' : T^D \to G'$ by $u'(\tau)=\tau_*M$ and by 
definition the Cartier dual of $M$ is the 1-motive $M^*=[T^D 
\stackrel{u'}{\rightarrow}G']$.

Now, we start the construction of $\Phi: \Pic(M)/\Pic(S) \to \Hom(M,M^*)$.
Let $(\cL,\delta)$ be a line bundle on $M$, where $\cL$ is a line bundle 
on $G$ and $\delta$ is a descent datum on $\cL$, i.e. an isomorphism 
\[\delta: p_2^* \cL \rightarrow \mu^*\cL\]
satisfying the cocycle condition (\ref{eq:CC}) (see end of 
Section~\ref{section:line_bundles}). We have to construct a morphism 
$\Phi(\cL,\delta) : M\to M^*$.
The first d\'evissage of $\Pic(M)$ (see Proposition \ref{prop:TGA}) furnishes the following 
exact 
sequence of groups
\[ 
\Hom(T,\GG_m) \stackrel{\xi}{\longrightarrow} 
\Pic(A)/\Pic(S) \stackrel{\pi^*}{\longrightarrow}  
\Pic(G)/\Pic(S) \stackrel{i^*}{\longrightarrow} 
\Pic(T)/\Pic(S).
\]
By Remark~\ref{rem:vanishing_Pic(T)} (2), 
since the tori underlying 1-motives are split locally for the \'etale topology, 
 there exists an \'etale and surjective morphism $S'\to S$ such that  
$(i^* \cL)_{|_{S'}}$ is trivial, which means that 
$$\cL_{|_{S'}} =\pi^* L$$
 for some line bundle $L \in \Pic(A_{|_{S'}}) /\Pic(S')$. 
Below we will construct locally defined linear morphisms $\Phi((\cL, 
\delta)_{|_{S'}}): M_{|_{S'}}\to 
M^*_{|_{S'}}$ from $M_{|_{S'}}$ to its Cartier dual $M^*_{|_{S'}}$. 
Since these are induced by a global line bundle $(\cL,\delta)$, they glue 
together and yield a linear morphism 
$\Phi(\cL,\delta) : M\to M^*$ over $S$. Hence it is not restrictive if we assume 
 $S'=S$ and $\cL=\pi^*L$ in order to simplify notation.

Via the classical homomorphism $\Phi_A: \Pic(A) \to \Hom(A,A^*)$, the line bundle $L$ furnishes 
a morphism of $S$-group schemes 
\[
\varphi_{L} :A \longrightarrow A^*, \quad a \mapsto 
(\mu_a^*L)\otimes L^{-1},
\]
 where $\mu_a : A\to A$ is the translation by $a$. Let us check that $\varphi_{L} :A \to A^*$ does not depend 
on the choice of the line bundle $L$ but only on its pullback $\cL=\pi^* L$, in 
other words $ \Phi_A \circ \xi =0$. Let $\alpha \in \Hom(T, \GG_m)$.
By definition of~$\xi$, 
$\xi(\alpha)$ is the image of the class $[\alpha_* G ]$
under the inclusion $ \Ext^1(A,\GG_m) \hookrightarrow \Pic (A)$, that 
is~$\xi(\alpha)$ 
comes from $\Ext^1(A,\GG_m)$. Hence by~\cite[Prop 1.8]{Raynaud_notes_de_cours} 
$\Phi_A(\xi(\alpha))=0$.

Our next aim is to define a morphism 
$\philt : G\to G'$ that lifts $\varphi_L$.
Before we recall briefly the isomorphism between $\underline{\mathrm{Ext}}^1(A,\GG_m)$ and $A^*:$
any extension of $A$ by $\GG_m$ is in particular a $\GG_m$-torsor over $A$ and therefore a line bundle over $A$, that is a point of $A^*$; on the other hand, to any line bundle $N$ over $A$ we associate the sheaf $E$ such that for any $S$-scheme $T$
\[E(T)=\{ (a, \tau) \; | \quad a \in A(T), \; \tau: N_T \stackrel{\cong}{\rightarrow}  \mu_a^* N_T\},
\]
where $N_T$ is the pull-back of $N$ to $A_T=A \times_S T$, which is in fact an 
extension of $A$ by $\GG_m$ (see \cite[\S2]{Demazure_notes_de_cours} for more 
details).
Now let $g\in G(S)$. The line bundle 
$\varphi_L(\pi(g))=\mu_{\pi(g)}^*L\otimes L^{-1}$ is a point of $A^*(S)$. We denote 
by  $E_{\varphi_L(\pi(g))}$ the corresponding extension of~$A$ by $\Gm$.
 As observed before, the extension 
$E_{\varphi_L(\pi(g))}$ has the following functorial description: 
$E_{\varphi_L(\pi(g))}(S)$ is the set of pairs $(a,\beta)$ where $a\in A(S)$ 
and 
$\beta : \varphi_L(\pi(g)) \to \mu_a^*\varphi_L(\pi(g))$ is an isomorphism of 
line bundles over $A$. We 
define functorially 
\begin{equation}
\label{eq:def_phiL_tilde}
\begin{array}{lrcl}
\philt : & G & \longrightarrow & G' \\
    & g & \longmapsto & \philt(g)=(E_{\varphi_L(\pi(g))}, \vt_{g}) ,
\end{array}
\end{equation}
 where the trivialization $\vt_g : X \to E_{\varphi_L(\pi(g))}$ is 
defined 
by
\begin{equation}
\label{eq:def_vtilde}
  \vt_g(x)=(v(x), \varphi_{g,x})
\end{equation}
with $\varphi_{g,x} : \varphi_L(\pi(g)) \to \mu_{v(x)}^*\varphi_L(\pi(g))$
 the isomorphism of line bundles on $A$ given by the following lemma.

\begin{lemma}
 \label{lem:def_vtilde}
With the above notation, there is a unique isomorphism $\varphi_{g,x} : 
\varphi_L(\pi(g)) \to \mu_{v(x)}^*\varphi_L(\pi(g))$ of line bundles on $A$ such that 
$\pi^*\varphi_{g,x} : \mu_g^*\cL\otimes \cL^{-1} \to
\mu_g^*(\mu_{u(x)}^*\cL)\otimes (\mu_{u(x)}^*\cL)^{-1}$ is equal to 
$\mu_g^*\delta_x\otimes \delta_x^{-1}$,
where $\delta_x : \cL \to \mu_{u(x)}^*\cL$ denotes the isomorphism 
$(x,\id_G)^*\delta$ of line bundles on $G$ induced by the descent datum~$\delta$.
\end{lemma}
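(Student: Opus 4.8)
The plan is to descend the construction of $\varphi_{g,x}$ from $G$ to $A$ using the étale covering $\pi : G \to A$ (after base change, we may even assume $T$ is split and $G \to A$ is a well-understood extension). First I would note that an isomorphism of line bundles $\varphi_L(\pi(g)) \to \mu_{v(x)}^*\varphi_L(\pi(g))$ on $A$ is the same thing as a global section of the line bundle $\mu_{v(x)}^*\varphi_L(\pi(g)) \otimes \varphi_L(\pi(g))^{-1}$, which (by the Theorem of the Square / definition of $\varphi_L$) is canonically trivial as an \emph{abstract} line bundle, so the datum amounts to a unit in $\Gamma(A, \cO_A^\times)$, i.e.\ a morphism $A \to \Gm$. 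The claim of the lemma is that there is a \emph{unique} such section whose pullback along $\pi$ equals the prescribed isomorphism $\mu_g^*\delta_x \otimes \delta_x^{-1}$ of line bundles on $G$.

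For existence: the isomorphism $\psi_x := \mu_g^*\delta_x \otimes \delta_x^{-1}$ is a section of a line bundle on $G$ that becomes trivial after pushing to $A$; concretely, once we have fixed the trivialization $\cL = \pi^*L$, one computes that $\mu_g^*\cL \otimes \cL^{-1} \cong \pi^*(\mu_{\pi(g)}^*L \otimes L^{-1}) = \pi^*\varphi_L(\pi(g))$, and similarly after further translation by $u(x)$; under these identifications $\psi_x$ becomes a morphism $G \to \Gm$. The key point is that this morphism is \emph{constant on the fibers of $\pi$}, hence factors through $A$, giving the desired $\varphi_{g,x}$ on $A$. To see the factorization I would use Rosenlicht's Lemma~\ref{lemma:invertible_sections_on_a_sav} in the style of Lemma~\ref{lem:morphisme_du_milieu}: the cocycle condition~\eqref{eq:CC} satisfied by $\delta$ forces the relevant morphism $G \to \Gm$, after normalizing at the unit section, to kill the torus $T$ (a morphism $T \to \Gm$ arising this way from a descent datum must vanish, since $\Hom(A,\Gm)=0$ controls the relevant obstruction), and hence to descend. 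The compatibility of $\psi_x$ with the group structure in $x$ is exactly the content of the cocycle identity, so functoriality of $x \mapsto \varphi_{g,x}$ comes for free.

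For uniqueness: if $\varphi_{g,x}$ and $\varphi_{g,x}'$ both pull back to $\psi_x$, their ratio is a section of $\cO_A$ pulling back to $1$ on $G$; since $\pi : G \to A$ is faithfully flat, $\pi^*$ is injective on sections of $\cO$, so the ratio is $1$. (Equivalently, $\iota^*$, and a fortiori $\pi^*$, is faithful, as noted at the end of Section~\ref{section:line_bundles}.) This is the easy half.

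The main obstacle I expect is the descent-to-$A$ step: showing rigorously that the morphism $G \to \Gm$ built from $\mu_g^*\delta_x \otimes \delta_x^{-1}$ is invariant under translation by $T$, i.e.\ really comes from $A$. The subtlety is bookkeeping with the several identifications $\mu_g^*\pi^*L \cong \pi^*\mu_{\pi(g)}^*L$ and the descent datum $\delta$, and making sure the normalization (dividing by the value at a unit section, so Rosenlicht applies) is consistent with the functoriality in $g$ and $x$. Once that invariance is in hand, the existence morphism on $A$ is forced and uniqueness is immediate from faithful flatness of $\pi$; the verification that the resulting $\varphi_{g,x}$ is an \emph{isomorphism} (not merely a morphism) of line bundles is automatic since $\psi_x$ is one and $\pi^*$ reflects isomorphisms of line bundles.
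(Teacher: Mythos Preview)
Your overall strategy is correct and matches the paper's: descend the isomorphism $\psi_x = \mu_g^*\delta_x \otimes \delta_x^{-1}$ along the faithfully flat map $\pi : G \to A$, with uniqueness coming from faithfulness of $\pi^*$. The paper frames the descent via Moret-Bailly's equivalence between rigidified line bundles on $A$ and pairs (rigidified bundle on $G$, trivialization of the restriction to $T$), but this is essentially equivalent to your ``constant on $T$-fibers'' formulation.

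However, your justification for the key step --- why the obstruction character $T \to \Gm$ vanishes --- is not right. You write that it ``must vanish, since $\Hom(A,\Gm)=0$ controls the relevant obstruction'', but $\Hom(A,\Gm)=0$ is not what is at stake here: a priori you have a character of $T$, not of $A$, and characters of $T$ certainly need not vanish. Nor does the cocycle condition~\eqref{eq:CC} help directly: it relates $\delta$ at different points of $X$, not at different points of $G$, so it does not by itself give $T$-invariance in the $G$-variable.

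The paper's argument is different and crucially lets $g$ vary (even though $g$ is fixed in the statement of the lemma). One packages the obstruction as a morphism of schemes $\zeta : G\times_S T \to \Gm$. For each fixed $g$ one has $\zeta(g,1)=1$, so Rosenlicht's Lemma makes $\zeta(g,\cdot) : T \to \Gm$ a group homomorphism, and hence $\zeta$ corresponds to a morphism of schemes $G \to T^D$. Since $G$ has connected fibers and $T^D$ is a lattice, this morphism is constant; evaluating at $g=1$ (where the relevant diagram commutes trivially) shows the constant value is $0$. It is this connectedness-versus-discreteness argument, not $\Hom(A,\Gm)=0$, that kills the obstruction. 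Your proposal, as written, does not supply this step.
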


\begin{proof}
For any $x\in X(S)$ and $b\in G(S)$, let us denote by 
$\ov{\delta}_{x,b}$ the isomorphism $\cO_{S} \to \cL_{u(x)b}\otimes\cL_b^{-1}$ 
induced by $\delta_{x,b}$ and by $\ov{\delta}_x: \cO_{G} \to \mu_{u(x)}^*\cL \otimes  \cL^{-1}$ the isomorphism induced by $\delta_x$. 
Consider the line bundle $N=\mu_{\pi(g)}^*(\mu_{v(x)}^*L\otimes 
L^{-1})\otimes (\mu_{v(x)}^*L\otimes L^{-1})^{-1}$ on $A$. In order to prove our Lemma it is enough to show that there is 
a unique isomorphism $\varphi : \cO_{A}\to N$ such that 
$\pi^*\varphi=\mu_g^*\ov{\delta}_x\otimes \ov{\delta}_x^{-1}$.

 By \cite[Chp I, Prop 2.6 and 7.2.2]{Moret_Bailly_Pinceaux} the pullback 
functor $\pi^*$ induces an equivalence between the 
category of rigidified (at the origin) line bundles on $A$, and the 
category of pairs $(\cN, \alpha)$ where $\cN$ is a rigidified line bundle on 
$G$ and $\alpha$ is a trivialization of $i^*\cN$ in the category of rigidified 
line bundles on~$T$. The line bundle $\cO_A$ is canonically rigidified at 1 and 
the line bundle $N$ on $A$ has a rigidification at 1 given by 
$\ov{\delta}_{x,g}\otimes \ov{\delta}_{x,1}^{-1}$. Hence by the above 
equivalence of categories to prove the Lemma it suffices to prove that 
$\mu_g^*\ov{\delta}_x\otimes \ov{\delta}_x^{-1}$ is compatible with the 
trivializations of $i^*\pi^*\cO_A$ and $i^*\pi^*N$. In other words, we 
have to prove that for 
any point $t$ of~$T$, the following diagram commutes:
\[\hskip-1cm
\xymatrix@R=0.4pc@C=0pc{
\cO_S \ar[r]^-{\ov{\delta}_{x,gi(t)}\otimes \ov{\delta}_{x,i(t)}^{-1}}
\ar[dd]_-{\ov{\delta}_{x,g}\otimes \ov{\delta}_{x,1}^{-1}} &
(\cL_{u(x)gi(t)}\otimes\cL_{gi(t)}^{-1})\otimes
(\cL_{u(x)i(t)} \otimes \cL_{i(t)}^{-1})^{-1}
\ar@{=}[d] \\
& (L_{\pi(u(x)gi(t))}\otimes L_{\pi(gi(t))}^{-1})\otimes
(L_{\pi(u(x)i(t))} \otimes L_{\pi(i(t))}^{-1})^{-1} \ar@{=}[d] \\
 (\cL_{u(x)g}\otimes\cL_{g}^{-1})\otimes
(\cL_{u(x)} \otimes \cL_{1}^{-1})^{-1} \ar@{=}[r]
& (L_{\pi(u(x)g)}\otimes L_{\pi(g)}^{-1})\otimes
(L_{\pi(u(x))} \otimes L_{1}^{-1})^{-1}
}
\]
This diagram defines an automorphism of $\cO_S$, hence an element of $\Gm(S)$, 
and the diagram commutes if and only if this element is equal to $1\in \Gm(S)$. 
As 
$g$ and 
$t$ vary, these diagrams induce a morphism of schemes $\zeta : G\times_S T \to 
\Gm$. If $t=1$, the diagram obviously commutes, hence $\zeta(g,1)=1$ and by 
Rosenlicht's Lemma~\ref{lemma:invertible_sections_on_a_sav} $\zeta(g,.)$ is a 
group homomorphism $T\to \Gm$. Then $\zeta$ corresponds to a morphism of 
schemes $G\to T^D$. Since $G$ has connected fibers and $T^D$ is a lattice, the 
latter morphism must be constant. But the diagram obviously commutes if 
$g=1$, hence $\zeta$ is constant equal to 1 and the diagram commutes for all 
points $g$ of $G$ and $t$ of $T$, as required.
\end{proof}

Now $\vt_{g}$ is well-defined and the formula~(\ref{eq:def_phiL_tilde})
defines a morphism of schemes $\philt : G\to G'$.
If $g\in G(S)$, the image $\pi'(\philt(g))$ is the class in $A^*(S)$ of the 
extension $E_{\varphi_L(\pi(g))}$, that is $\pi'(\philt(g))=\varphi_L(\pi(g))$,
and so the right-hand square in the following diagram is commutative. We 
denote by $h : T \to X^D$ the unique morphism that makes the left-hand square 
commutative:
\[
\xymatrix{
0 \ar[r] & T
    \ar[r]^{i} \ar@{.>}[d]^{h}& G
    \ar[r]^{\pi} \ar[d]^{\philt}& A
    \ar[r] \ar[d]^{\varphi_L} & 0 \\
0 \ar[r] & X^D
    \ar[r]_{i'} & G'
    \ar[r]_{\pi'} & A^*
    \ar[r] & 0
}
\]

\begin{remark}\rm
 \label{rem:description_h}
We can give an explicit description of $h : T\to X^D$ in terms of $(\cL, 
\delta)$ as follows. Let $t\in T(S)$ be a point of $T$. Then by definition 
$\philt(i(t))=(E_{\varphi_L(\pi(i(t)))}, \vt_{i(t)})$. Since~$\pi(i(t))=1$ the 
extension $E_{\varphi_L(\pi(i(t)))}$ is trivial. The morphism $h(t) : X\to \Gm$ 
is given by~$\vt_{i(t)}$. Let $x\in X(S)$. By definition 
$\vt_{i(t)}(x)=(v(x),\varphi_{i(t),x})$. Since the line bundle $\varphi_L(1)$ 
is trivial, the isomorphism $\varphi_{i(t),x} : \varphi_L(1)\to 
\mu_{v(x)}^*\varphi_L(1)$ can be seen as a morphism of schemes $A\to \Gm$, 
and $h(t)(x)\in \Gm(S)$ is the (necessarily constant) value of this morphism. 
We may evaluate it at the origin of $A$ and we see that $h(t)(x)$ is the point 
of $\Gm$ that corresponds to the isomorphism of (canonically trivial) line 
bundles $\delta_{x,i(t)}\otimes \delta_{x,1}^{-1} : \cL_{i(t)}\otimes 
\cL_{1}^{-1} \to \cL_{u(x)i(t)}\otimes \cL_{u(x)}^{-1}$.
\end{remark}

It is clear from the above Remark 
that $h$ does not depend on the choice of $L$. Moreover, since $h(1)=1$, it follows 
from Rosenlicht's Lemma~\ref{lemma:invertible_sections_on_a_sav} that $h$ is a group homomorphism. 
Then by Lemma~\ref{lem:morphisme_du_milieu} $\philt$ is also a group homomorphism, 
and it does not depend on the choice of the lifting~$L$ of~$\cL$ (since 
$\varphi_L$ does not depend on this choice as we have already proved).

The following proposition proves that the pair $(h^D, \philt)$ is a morphism of 
1-motives and so we can set
\[
\begin{array}{lrcl}
\Phi : & \Pic(M)/\Pic(S) & \longrightarrow & \Hom(M,M^*) \\
    & (\cL,\delta) & \longmapsto & \Phi(\cL,\delta)= (h^D, \philt)  .
\end{array}
\]

\begin{proposition}
 \label{prop:seconde_construction}
Let $h^D : X\to T^D$ be the Cartier dual of $h$. Then the diagram
\[
\xymatrix{
X \ar[r]^{h^D} \ar[d]_{u} &
T^D \ar[d]^{u'} \\
G \ar[r]_{\philt} & G'
}
\]
is commutative. In other words, the pair $(h^D, \philt)$ is a morphism of 
1-motives from $M$ to~$M^*$.
\end{proposition}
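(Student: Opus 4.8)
The plan is to unwind both sides of the square on an arbitrary point $x\in X(U)$ and check that the two resulting points of $G'(U)$ coincide. Going around the bottom, $\philt(u(x)) = (E_{\varphi_L(\pi(u(x)))},\vt_{u(x)}) = (E_{\varphi_L(v(x))},\vt_{u(x)})$, an extension of $A$ by $\Gm$ together with its trivialization over $X$. Going around the top, $u'(h^D(x))$ is by definition of the Cartier dual the pushdown $h^D(x)_*M$ of the extension $0\to T\to M\to \ov M\to 0$ along the character $h^D(x):T\to\Gm$, viewed as an extension of $\ov M=[v:X\to A]$ by $\Gm$, i.e. an extension of $A$ by $\Gm$ with a trivialization over $X$. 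So I must produce a canonical isomorphism of extensions-with-trivialization between these two objects. Since extensions of $\ov M$ by $\Gm$ have no nontrivial automorphisms, it suffices to exhibit any isomorphism, and this can be done after the faithfully flat base change $S'\to S$ on which $\cL=\pi^*L$; the identification then descends.

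First I would identify the two underlying extensions of $A$ by $\Gm$. The extension $E_{\varphi_L(v(x))}$ is, by the functorial description recalled before Lemma~\ref{lem:def_vtilde}, the sheaf of pairs $(a,\beta)$ with $a\in A$ and $\beta$ an isomorphism $\varphi_L(v(x))\to \mu_a^*\varphi_L(v(x))$. On the other hand the pushdown $h^D(x)_*M$ is the extension of $A$ by $\Gm$ obtained from $G$ by pushing the torus $T$ to $\Gm$ via the character $h^D(x)$; concretely it is $\big(G\times^T\Gm\big)/X$-flavoured but more usefully it represents the functor sending $a\in A$ to the $\Gm$-torsor of liftings of $a$ to $G$, twisted by $h^D(x)$. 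The comparison of these two descriptions is exactly the content of the classical identification underlying $\varphi_L$ (the ``seesaw''/Mumford description of $\mu_a^*L\otimes L^{-1}$ as measuring the failure of $L$ to descend through $G\to A$ after pushing $T$ by a character); I would cite \cite[\S2]{Demazure_notes_de_cours} or \cite[Prop 1.8]{Raynaud_notes_de_cours} for this, being careful that the character appearing there is precisely the $h$ built in Remark~\ref{rem:description_h}.

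Next I would check that the trivializations over $X$ match. On the pushdown side, the trivialization of $v^*(h^D(x)_*M)$ comes from the given lift $u:X\to G$ of $v:X\to A$, pushed along $h^D(x)$; so on a point $y\in X$ it is governed by the data of $u(y)\in G$ together with $h^D(x)(\text{—})$, which by definition of $h$ and of Cartier duality unwinds to the scalar $h(t)(x)$-type expression $\delta_{x,i(t)}\otimes\delta_{x,1}^{-1}$ from Remark~\ref{rem:description_h}, evaluated appropriately. On the other side, $\vt_{u(x)}(y)=(v(y),\varphi_{u(x),y})$ where $\varphi_{u(x),y}$ is the isomorphism produced by Lemma~\ref{lem:def_vtilde}, characterized by $\pi^*\varphi_{u(x),y}=\mu_{u(x)}^*\delta_y\otimes\delta_y^{-1}$. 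So both trivializations, transported to the common underlying extension, are determined by the isomorphisms $\mu_{u(x)}^*\delta_y\otimes\delta_y^{-1}$ of line bundles on $G$, and the cocycle condition~\eqref{eq:CC} on $\delta$ is exactly what guarantees that they are compatible with the group laws — i.e. that $\vt_{u(x)}$ really is the pushforward of $u:X\to G$. This verification is essentially the same computation as the one already carried out in the proof of Lemma~\ref{lem:def_vtilde} and in Remark~\ref{rem:description_h}, so I expect no new difficulty here beyond bookkeeping.

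The main obstacle, I expect, is purely organizational: keeping straight the several functorial descriptions of ``extension of $A$ by $\Gm$'' (as a $\Gm$-torsor on $A$ with a compatible structure, as the pairs $(a,\beta)$, and as the pushdown $h^D(x)_*M$) and pinning down the canonical isomorphisms between them so that the diagram commutes on the nose rather than merely up to a unit. Because extensions of $\ov M$ by $\Gm$ are rigid (no nontrivial automorphisms, as recalled in the description of $M^*$), any ambiguity is automatically resolved, so once the underlying extensions and the trivializations are matched the commutativity follows. A clean way to organize the whole argument is: (i) reduce to $S'=S$, $\cL=\pi^*L$; (ii) use Lemma~\ref{lem:morphisme_du_milieu} together with $\pi'\circ\philt=\varphi_L\circ\pi$ and $\pi\circ u = v$, $\pi^*$-applied to the square, so that commutativity of the $G'$-square can be tested after projecting to $A^*$ (where it reads $\varphi_L\circ v = \pi'\circ u'\circ h^D$, automatic since both sides are $\varphi_L\circ v$) and along $X^D\hookrightarrow G'$ (where it becomes an identity of characters $T\to\Gm$ that is exactly the defining property of $h$); (iii) conclude by the uniqueness part of Lemma~\ref{lem:morphisme_du_milieu} applied to the exact sequence $0\to T\to G\to A\to 0$, since $\philt\circ u$ and $u'\circ h^D$ induce the same maps $T\to X^D$ (namely $h^D\circ(u'|_{T^D})^{-1}$-compatible, i.e. $h$ after dualizing, by Remark~\ref{rem:description_h}) and the same maps $A\to A^*$ (namely $\varphi_L\circ v$). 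This reduces the claim to the two compatibilities discussed above and avoids any global computation.
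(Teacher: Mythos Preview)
Your overall strategy --- compare $u'(h^D(x))$ and $\philt(u(x))$ as extensions of $\ov M$ by $\Gm$ and exploit their rigidity --- is exactly the paper's. But the execution has two genuine gaps.

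First, the ``clean'' alternative (iii) does not work. Lemma~\ref{lem:morphisme_du_milieu} compares morphisms $G\to G'$ whose \emph{domain} is an extension of an abelian scheme by a torus; its proof relies on Rosenlicht's Lemma and the connectedness of the fibers of $G$. The maps you want to compare, $\philt\circ u$ and $u'\circ h^D$, have domain the lattice $X$. After composing with $\pi':G'\to A^*$ both give $\varphi_L\circ v$, so their difference is a morphism $X\to X^D$; there is no uniqueness principle forcing such a morphism to vanish. The phrase ``induce the same maps $T\to X^D$'' makes no sense for maps out of $X$, and Lemma~\ref{lem:morphisme_du_milieu} is simply not applicable here.

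Second, in the direct approach, the identification of the two underlying extensions of $A$ by $\Gm$ is where the real content lies and cannot be dispatched by citing \cite{Demazure_notes_de_cours} or \cite{Raynaud_notes_de_cours}. The paper argues as follows. The square defining $h$ gives $h_*M=\ov{\varphi_L}^*M_L$ as extensions of $\ov M$ by $X^D$ (Serre's criterion), hence $u'(h^D(x))={ev_x}_*\ov{\varphi_L}^*M_L$, which is the pair $(\varphi_L^*G',\ov v)$ pushed along $ev_x$. One then needs an explicit isomorphism $q:\varphi_L^*G'\to E_{\varphi_L(v(x))}$ over $ev_x:X^D\to\Gm$. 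The construction is: a point $(a,(E_{\varphi_L(a)},\vt))$ of $\varphi_L^*G'$ gives via $\vt(x)$ an isomorphism $\beta:\varphi_L(a)\to\mu_{v(x)}^*\varphi_L(a)$, and the \emph{symmetry isomorphism} of $\theta_2(L)$ (swapping the roles of $a$ and $v(x)$) converts $\beta$ into $\beta':\varphi_L(v(x))\to\mu_a^*\varphi_L(v(x))$; set $q=(a,\beta')$. This symmetry swap is precisely what your sketch omits. The final step $q\circ\ov v=\vt_{u(x)}$ then unwinds, after applying the faithful functor $\pi^*$, to the assertion that $\mu_{u(x)}^*\delta_y\otimes\delta_y^{-1}$ agrees with $\mu_{u(y)}^*\delta_x\otimes\delta_x^{-1}$ through the symmetry --- and \emph{this} is where the cocycle condition~\eqref{eq:CC}, expanded once as $\delta_{x+y,g}$ and once as $\delta_{y+x,g}$, is actually used. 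Your paragraph on trivializations names~\eqref{eq:CC} but never identifies this $x\leftrightarrow y$ swap as the point, so the bookkeeping you anticipate is in fact the whole proof.
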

\begin{proof}
 Let $x\in X(S)$. We have to prove that $u'(h^D(x))=\philt(u(x))$. With the 
identification $X\simeq X^{DD}$, the morphism $h^D(x)$ is equal to $ev_x\circ h 
: T \to \Gm$ where $ev_x : X^D \to \Gm$ is the evaluation at $x$. Hence, by 
definition, $u'(h^D(x))$ is the extension of $\ov{M}$ by $\Gm$ obtained 
from~$M$ by pushdown along the morphism $ev_x\circ h$.
\begin{equation}
\label{eq:def_uprime}
 u'(h^D(x))={ev_x}_*h_*M
\end{equation}
Let $M_L= [\philt\circ u : X\to G']$ and $\ov{M_L} = M_L /W_{-2} M_L =[\varphi_L\circ v : X \to A^*]$. Consider the two morphisms of 1-motives $\varphi_L'=(id_X, \philt) : M\to M_L$ and 
$\ov{\varphi_L} =(id_X, \varphi_L) : \ov{M}\to \ov{M_L}$ which fit in the following diagram of extensions:
\[
\xymatrix{
0 \ar[r] &
  T  \ar[r]^{} \ar[d]^{h}&
  M  \ar[r]^{} \ar^{\varphi_L'}[d]&
  \ov{M}  \ar[r] \ar^{\ov{\varphi_L}}[d]& 0 \\
0 \ar[r] &
  X^D  \ar[r]^{} &
  M_L  \ar[r]^{} &
  \ov{M_L}  \ar[r] & 0
}
\]
By \cite[Chp VII, (7) and (8)]{Serre_groupesalgebriquesetcorpsdeclasses} the 
existence of $\varphi_L'$ proves that $h_*M$ and $\ov{\varphi_L}^*M_L$ are isomorphic as extensions 
of $\ov{M}$ by $X^D$. Combining this with \eqref{eq:def_uprime} we get that
\begin{equation}
 u'(h^D(x))={ev_x}_*\ov{\varphi_L}^*M_L
\end{equation}
We can describe extensions of $\ov{M_L}$ by $X^D$ in terms of pairs $(E, 
\xi)$ where $E$ is an extension of~$A^*$ by $X^D$ and $\xi$ is a trivialization 
of $(\varphi_L\circ v)^*E$. In these terms, the extension $M_L$ corresponds to 
$G'$ together with the morphism $\philt\circ u : X \to G'$. Hence the extension 
$\ov{\varphi_L}^*M_L$ of $\ov{M}$ by~$X^D$ corresponds to the pair 
$(\varphi_L^*G', \ov{v}),$ where the trivialization $\ov{v}$ is the morphism $X \to \varphi_L^*G'$ 
induced by $\philt\circ u$, with $\philt $ defined in (\ref{eq:def_phiL_tilde}):
\[
\xymatrix{
&&& X \ar@/_1pc/[ld]_{\ov{v}} \ar[d]^v\\
0 \ar[r] &
X^D    \ar[r]^{} \ar@{=}[d]&
\varphi_L^*G'    \ar[r]^{} \ar[d] \cartesien&
A    \ar[r]    \ar[d]^{\varphi_L}& 0 \\
0 \ar[r] &
X^D    \ar[r]^{i'} &
G'    \ar[r]^{\pi'} &
A^*    \ar[r]    & 0
}
\]
Set theoretically $\varphi_L^*G'(S)= (G' \times_{A^*}A)(S)$ consists of pairs 
$(a,(E_{\varphi_L(a)},\vt))$ where $a\in A(S)$ and $(E_{\varphi_L(a)},\vt) \in 
G'(S),$ with $\vt : X \to E_{\varphi_L(a)}$
a trivialization of $v^*E_{\varphi_L(a)}$. The 
morphism $\ov{v} : X \to \varphi_L^*G'$ is then defined by
\[
 \ov{v}(y)=(v(y), (E_{\varphi_L(v(y))},\vt_{u(y)}))
\]
for any point $y\in X(S)$, where $\vt_{u(y)}$ is defined in 
equation~\eqref{eq:def_vtilde}.

Now we will construct a morphism $q : \varphi_L^*G'\to E_{\varphi_L(v(x))}$ 
that fits in the following commutative diagram:
\begin{equation}
\label{diag:def_q}
 \xymatrix{
0 \ar[r] &
X^D    \ar[r]^{}\ar[d]^{ev_x} &
\varphi_L^*G'    \ar[r]^{} \ar[d]^{q} &
A    \ar[r] \ar@{=}[d] & 0 \\
0 \ar[r] &
\Gm    \ar[r]^{} &
E_{\varphi_L(v(x))}    \ar[r]^{} &
A    \ar[r] & 0
}
\end{equation}
This will allow us to identify the pushdown ${ev_x}_*\varphi_L^*G'$ with 
$E_{\varphi_L(v(x))}$ and the extension ${ev_x}_*\ov{\varphi_L}^*M_L$ of 
$\ov{M}$ by $\Gm$ then corresponds to the pair $(E_{\varphi_L(v(x))}, q\circ 
\ov{v})$. The construction of $q$ is as follows. Let $(a,(E_{\varphi_L(a)},\vt))$ be an element of 
$\varphi_L^*G'(S)$, i.e. $a\in A(S)$ and $(E_{\varphi_L(a)},\vt) \in G'(S)$, with $\vt : X\to E_{\varphi_L(a)}$ an 
$A$-morphism. In particular we have a point $\vt(x) \in E_{\varphi_L(a)}(S)$ 
above $v(x)$, hence an isomorphism of line bundles $\beta : \varphi_L(a) \to 
\mu_{v(x)}^*\varphi_L(a)$. The latter isomorphism corresponds to a 
trivialization $\cO_A \simeq \mu_{v(x)+a}^*L\otimes \mu_{v(x)}^*L^{-1} \otimes 
\mu_{a}^*L^{-1} \otimes L$. Via the symmetry isomorphism, 
this in turn induces a trivialization of $\mu_{v(x)+a}^*L \otimes 
\mu_{a}^*L^{-1}\otimes 
\mu_{v(x)}^*L^{-1} \otimes L$, hence
an isomorphism of line bundles $\beta' : \varphi_L(v(x)) \to 
\mu_a^*\varphi_L(v(x))$. We define $q$ by
\[
 q(a,(E_{\varphi_L(a)},\vt)):=(a, \beta')
\]
with the above notation. In the diagram~\eqref{diag:def_q}, it is obvious that 
the right-hand square commutes. To prove that the left-hand square also 
commutes, we observe that 
both morphisms from $X^D$ to $E_{\varphi_L(v(x))}$ map an element $\alpha : 
X\to \Gm$ to the pair $(1,\alpha(x))$ where $1\in A(S)$ is the unit of $A$ and 
$\alpha(x)\in \Gm(S)$ is seen as an automorphism of the line bundle 
$\varphi_L(v(x))$. Now it follows from Lemma~\ref{lem:morphisme_du_milieu} that 
$q$ is automatically a group homomorphism.

We have proved that $u'(h^D(x))$ corresponds to the pair 
$(E_{\varphi_L(v(x))}, q\circ \ov{v})$. On the other hand, by definition of 
$\philt$, the extension $\philt(u(x))$ corresponds to the pair 
$(E_{\varphi_L(v(x))}, \vt_{u(x)})$. Hence to conclude the proof, it 
remains to prove that $q\circ \ov{v}=\vt_{u(x)}$. Let $y\in X(S)$ be a point of 
$X$ and let us prove that $q(\ov{v}(y))=\vt_{u(x)}(y)$. Unwinding the 
definitions of $q$, $\ov{v}$ and $\vt_{u(x)}$, we have to prove that the 
isomorphism $\varphi_{u(x),y} : \varphi_L(v(x)) \to 
\mu_{v(y)}^*\varphi_L(v(x))$ (see Lemma~\ref{lem:def_vtilde}) is equal to the 
isomorphism $\beta'$ induced by 
$\varphi_{u(y),x} : \varphi_L(v(y)) \to 
\mu_{v(x)}^*\varphi_L(v(y))$ via the symmetry isomorphism as explained in the 
previous paragraph (with $a=v(y)$). Since $\pi^*$ is faithful on the category 
of line bundles, it suffices to check the equality after applying $\pi^*$. In 
other words we have to prove that the descent datum $\delta$ on $\cL$ satisfies 
the following condition: $\mu_{u(x)}^*\delta_y\otimes \delta_y^{-1}$ should be 
equal to the isomorphism induced by $\mu_{u(y)}^*\delta_x\otimes \delta_x^{-1}$ 
through the symmetry isomorphism. But this is a consequence of the cocycle 
condition~\eqref{eq:CC} on the descent datum $\delta$ (use it both for $\delta_{x+y}$ and 
$\delta_{y+x}$). 
\end{proof}

This concludes the proof of Theorem~\ref{thm:existence_Phi}.
 We do not prove here that
$\Phi : \Pic(M)/\Pic(S) \to \Hom(M,M^*)$ (\ref{eq:PHI}) is a group homomorphism: 
this will follow from Corollary~\ref{cor:existence_phi_par_cube}, where we give a second construction of $\Phi$,
and from the comparison Theorem~\ref{thm:comparaison}.

We finish this Section giving another interesting construction of the morphism $\Phi : \Pic(M)/\Pic(S) \to \Hom(M,M^*)$ in the special case of Kummer 1-motives, that is 1-motives without abelian part. This construction, which is based on the second d\'evissage of the Picard group of $M$, involves only the group $\Lambda$ introduced in Definition~\ref{Lambda}.

 Let $M=[u: X \rightarrow T]$ be a Kummer 1-motive over a reduced 
scheme $S$.
In this case $M^*= [u^D: T^D \rightarrow X^D]$ and a morphism 
from $M$ to $M^*$ is a commutative diagram
\[
 \xymatrix{
X \ar[r]^g\ar[d]_u & T^{D}\ar[d]^{u^{D}} \\
T \ar[r]_{h} & X^{D} 
}
\]
By Definition~\ref{Lambda}, $\Lambda$ is a subgroup of 
$\Hom(M,M^*)$: an element $\lambda\in\Lambda$ defines the morphism $M \to 
M^*$ given by $\lambda: X \to T^D$ and $\lambda^D : T \to X^D $.

From Proposition~\ref{prop:devissage_kernel}, we know that the kernel $K$ of $\iota^* : 
\Pic(M)\to \Pic(T)$ fits in the exact sequence 
\[
 \Hom(T,\GG_m) \stackrel{\circ u}{\longrightarrow}  \Hom(X,\GG_m) 
\stackrel{\beta^*}{\longrightarrow} K \stackrel{\Theta}{\longrightarrow} \Lambda
 \stackrel{\Psi}{\longrightarrow} \Sigma.
\]
Then, locally on $S$, the morphism $\Phi : \Pic(M)\to \Hom(M,M^*)$ coincides 
with $\Theta$ in the following sense. Let $\cL$ be a line bundle on $M$. By 
Remark~\ref{rem:vanishing_Pic(T)} (2), 
since the tori underlying 1-motives are split locally for the \'etale topology, 
 there exists an \'etale and surjective morphism $S'\to S$ such that  
$(\iota^* \cL)_{|_{S'}}$ is trivial, which means that 
 $\cL_{|_{S'}} \in K$. 
Then $\Phi(\cL_{|_{S'}})$ is equal to $\Theta(\cL_{|_{S'}})$ via the inclusion 
$\Lambda\subset \Hom(M,M^*)$.

\begin{remark}\label{PhiNotSurjective} The homomorphism $\Phi : \Pic(M)/\Pic(S)\to \Hom(M,M^*)$ is far 
from being surjective. For example, let $M=[X\stackrel{u}{\rightarrow}T]$ with 
$X=\ZZ$, $T=\Gm$ and $u$ the trivial morphism. Then $\Hom(M,M^*)$ identifies 
with $\Hom(X,X)^2\simeq \ZZ^2$ and by Proposition ~\ref{prop:devissage_PicM}, 
the group
$\Pic(M)/\Pic(S)$ identifies with $\Hom(X,\Gm)\times \Lambda\simeq \Gm(S)\times 
\ZZ$. The morphism $\Phi : \Gm(S)\times \ZZ \to \ZZ^2$ is given by $(\gamma, 
n)\mapsto (n,n)$. 
\end{remark}

\section{Linear morphisms defined by cubical line bundles}
\label{section:linear_morphisms_def_by_cubical}

In this Section we first give the definition and basic properties of cubical 
structures on a line bundle over a commutative group 
stack $\cG$. Then we explain how a cubical line bundle on $\cG$, that is a line bundle on $\cG$ endowed with a cubical structure, defines an additive functor $\cG\to D(\cG)$ from $\cG$ to its dual.

Let $\cG$ be a commutative group stack over $S$, 
whose group law $(a,b) \mapsto ab$ will be denoted multiplicatively.
We denote by $\cG^3$ the commutative group stack
 $\cG\times_S \cG \times_S \cG$. Following \cite[Chp 
I, 2.4]{Moret_Bailly_Pinceaux}
we define a functor from the category of line bundles on $\cG$ to the category 
of line bundles on $\cG^3$
\begin{equation*}
\theta: \PIC(\cG) \longrightarrow \PIC(\cG^3)\\
\end{equation*}
with
\[
 \theta(\cL)=m_{123}^*\cL\otimes (m_{12}^*\cL)^{-1}\otimes (m_{13}^*\cL)^{-1}
\otimes (m_{23}^*\cL)^{-1}\otimes m_{1}^*\cL\otimes m_{2}^*\cL\otimes m_{3}^*\cL
\]
where for $I=\{i_1, \dots, i_l\}\subset \{1,2,3\}$, $m_{i_1\dots i_l}$ denotes 
the additive functor $\cG^3 \to \cG$ given by $(a_1, a_2, a_3) \mapsto 
a_{i_1}\dots a_{i_l}$. 
(Our $\theta(\cL)$ is denoted by $\theta_3(\cL)$ in \cite{Moret_Bailly_Pinceaux}.) 
 In terms of points the above definition becomes
\begin{equation}\label{eq:theta(L)}
 \theta(\cL)_{a_1, a_2, a_3}=\cL_{a_1a_2a_3}\otimes 
(\cL_{a_1a_2})^{-1}\otimes (\cL_{a_1a_3})^{-1}
\otimes (\cL_{a_2a_3})^{-1}\otimes \cL_{a_1}\otimes 
\cL_{a_2}\otimes \cL_{a_3}
\end{equation}
for any $(a_1,a_2,a_3) \in \cG^3$. As in \cite[Chp I, 
(2.4.2)]{Moret_Bailly_Pinceaux} the symmetric group $\Sgo_3$ of permutations 
acts on $\theta(\cL)$, that is for $(a_1,a_2,a_3)\in \cG^3$ and for $\sigma\in 
\Sgo_3$ there is a natural isomorphism
\begin{equation}
\label{isom:perm}
p^{\sigma}_{a_1,a_2,a_3} : \theta(\cL)_{a_1,a_2,a_3}
\stackrel{\sim}{\longrightarrow}
\theta(\cL)_{a_{\sigma(1)},a_{\sigma(2)},a_{\sigma(3)}}.
\end{equation}
Moreover, as in \cite[Chp I, (2.4.4)]{Moret_Bailly_Pinceaux}, $\theta(\cL)$ is 
endowed with cocycle isomorphisms:
for $a,b,c,d \in \cG$ one of these cocycle isomorphisms is
\begin{equation}
 \label{isom:coc}
coc_{a,b,c,d} : \theta(\cL)_{ab,c,d}\otimes \theta(\cL)_{a,b,d}
\stackrel{\sim}{\longrightarrow}
\theta(\cL)_{a,bc,d}\otimes\theta(\cL)_{b,c,d}\, ,
\end{equation}
the others are obtained from this one by permutation.

\begin{definition}
\label{def:cub}
 Let $\cL$ be a line bundle on $\cG$. A \emph{cubical structure on} $\cL$ is an 
isomorphism
$\tau : \cO_{\cG^3} \to \theta(\cL)$ of line bundles over $\cG^3$ that is 
compatible with the isomorphisms 
\eqref{isom:perm} and~\eqref{isom:coc}. In other words:
\begin{itemize}
 \item[(i)] For any $\sigma\in \Sgo_3$ and any $(a_1, a_2, a_3) \in \cG^3$, 
$\tau_{a_{\sigma(1)},a_{\sigma(2)},a_{\sigma(3)}}=
p^{\sigma}_{a_1,a_2,a_3}\circ \tau_{a_1,a_2,a_3}$.
\item[(ii)] For any $a,b,c,d\in \cG$,
$\tau_{a,bc,d}\otimes \tau_{b,c,d}
=coc_{a,b,c,d}\circ (\tau_{ab,c,d}\otimes \tau_{a,b,d})$.
\end{itemize}
A \emph{cubical line bundle on} $\cG$ is a pair $(\cL,\tau)$ where $\cL$ is a 
line bundle on $\cG$ and 
$\tau$ is a cubical structure on $\cL$. A \emph{morphism of cubical line bundles} 
$(\cL,\tau) \to (\cL',\tau')$ is a morphism $f : \cL\to \cL'$ of line bundles on 
$\cG$ such 
that $\tau'=\theta(f)\circ \tau$.
\end{definition}

We denote by $\CUB(\cG)$ the category of 
cubical line bundles on $\cG$, and by $\CUB^1(\cG)$ the group of isomorphism 
classes of 
cubical line bundles on $\cG$.

Let $\champcub(\cG)$ be the stack of cubical line bundles on $\cG$, i.e. 
for any $S$-scheme $U$, $\champcub(\cG)(U)$ is the category of 
cubical line bundles on $\cG \times_S U$. If $(\cL,\tau)$ and $(\cL',\tau')$ are 
two
cubical line bundles on $\cG$, then $\tau$ and $\tau'$ induce a canonical 
cubical 
structure on the line bundle $\cL\otimes \cL'$ and we denote by $(\cL,\tau)\otimes 
(\cL',\tau')$ the resulting cubical line bundle. The operation $\otimes$ endows
 $\champcub(G)$ with a structure of commutative group stack.

As in \cite[Chp I, 2.3]{Moret_Bailly_Pinceaux}
we also have a functor from the category of line bundles on $\cG$ to the 
category of line bundles on $\cG^2$
\begin{equation*}
\theta_2: \PIC(\cG) \longrightarrow  \PIC(\cG^2)
\end{equation*}
defined by
\[
 \theta_2(\cL)_{a,b}=\cL_{ab}\otimes \cL_a^{-1} \otimes \cL_b^{-1}
\]
for all $\cL\in \PIC(\cG)$ and all $(a,b) \in \cG^2$. This line bundle 
$\theta_2(\cL)$ furnishes
 a morphism of stacks  
\begin{eqnarray}
\nonumber \varphi_\cL : \cG  & \longrightarrow & \cHom_{S-stacks}(\cG,B\Gm)\\
\nonumber  a & \longmapsto & \big(\varphi_\cL(a):b \mapsto \varphi_\cL(a)(b)=\theta_2(\cL)_{a,b} \big).
\end{eqnarray}
 It is possible to recover $\theta(\cL)$ from $\theta_2(\cL)$ via the following two canonical isomorphisms
\[
\theta_2(\cL)_{ab,c}\otimes 
\theta_2(\cL)_{a,c}^{-1}\otimes \theta_2(\cL)_{b,c}^{-1}
 \simeq  \theta(\cL)_{a,b,c}
 \simeq \theta_2(\cL)_{a,bc}\otimes 
\theta_2(\cL)_{a,b}^{-1}\otimes \theta_2(\cL)_{a,c}^{-1}\, .
\]
Now let $\tau$ be a cubical structure on $\cL$. Through the above two isomorphisms, 
 $\tau$ induces two isomorphisms of line bundles (thought of as partial 
composition laws on $\theta_2(\cL)$):
\begin{align*}
 \tau_{a,b,c}^1 &: \theta_2(\cL)_{a,c}\otimes\theta_2(\cL)_{b,c}
\to \theta_2(\cL)_{ab,c} \\
\tau_{a,b,c}^2 &: \theta_2(\cL)_{a,b}\otimes\theta_2(\cL)_{a,c}
\to \theta_2(\cL)_{a,bc}\, .
\end{align*}
Generalizing \cite[Chp I, 2.5]{Moret_Bailly_Pinceaux} to line bundles on 
stacks, the conditions (i) 
and (ii) on $\tau$ imply that the two composition laws $\tau^1$ and $\tau^2$ are 
a structure  
of symmetric biextension of $(\cG,\cG)$ by $\Gm$ on the $\GG_m$-torsor 
$\theta_2(\cL)$ (see \cite[Definition 5.1]{Bertolin} for the notion of 
biextension of commutative group stacks). In particular, 
the isomorphism~$\tau^2$ provides for all points $a,b,c$ of $\cG$ a 
functorial isomorphism 
\[
 (\tau^2_{a,b,c})^{-1} : \varphi_{\cL}(a)(bc) \to 
\varphi_{\cL}(a)(b).\varphi_{\cL}(a)(c).
\]
The commutativity and associativity conditions that $\tau^2$ satisfies (see for 
instance the diagrams (1.1.3) and (1.1.5) p.2 in \cite{Breen_Fonctions_Theta}) 
imply that this isomorphism is compatible with the commutativity and 
associativity isomorphisms of $\cG$ and $B\Gm$. 
Hence $\varphi_\cL(a)$, equipped with this isomorphism, is an
additive functor from $\cG$ to $B\Gm$, that is $\varphi_\cL(a)$ is a point of 
$D(\cG)=\cHom (\cG, B\GG_m).$
This defines a morphism of 
stacks 
$$\varphi_\cL: \cG \longrightarrow D(\cG).$$
The isomorphism $(\tau^1)^{-1}$ defines 
a functorial isomorphism from $\varphi_\cL(ab)$ to 
$\varphi_\cL(a).\varphi_\cL(b)$ hence it endows $\varphi_{\cL}$ with the 
structure of an additive 
functor. The required compatibility conditions are given by the commutativity 
and associativity conditions on $\tau^1$ and by the compatibility 
of~$\tau^1$ and $\tau^2$ with each other (see \cite{Breen_Fonctions_Theta}, 
diagrams (1.1.4), (1.1.5) and (1.1.6)). From now on we denote by
$\varphi_{(\cL,\tau)}$ the resulting additive functor from $\cG$ to 
$D(\cG)$.

If $\alpha : (\cL,\tau) \to (\cL',\tau')$ is an isomorphism of cubical line 
bundles, the isomorphism $\theta_2(\alpha) : \theta_2(\cL)\to \theta_2(\cL')$ 
provides an isomorphism of functors from $\varphi_{(\cL,\tau)}$ to $\varphi_{(\cL',\tau')}$.
Since $\alpha$ is compatible with the cubical structures $\tau$ and $\tau'$, it follows that the 
latter isomorphism of functors is compatible with the additive structures of 
$\varphi_{(\cL,\tau)}$ and $\varphi_{(\cL',\tau')}$, in other words it is an 
isomorphism of additive functors, i.e. it is an isomorphism in 
$\cHom(\cG,D(\cG))$. This way the construction $(\cL,\tau)\mapsto 
\varphi_{(\cL,\tau)}$ is functorial and we get a morphism of stacks from 
${\mathcal Cub}(\cG)$ to ${\cHom}(\cG,D(\cG))$. Lastly, if $(\cL,\tau)$ and 
$(\cL',\tau')$ are two cubical line bundles, the canonical isomorphism 
$\theta_2(\cL\otimes \cL') \simeq \theta_2(\cL)\otimes \theta_2(\cL')$ 
(\cite[Chp I, 2.2.1]{Moret_Bailly_Pinceaux}) induces an 
isomorphism of functors from $\varphi_{(\cL,\tau)\otimes (\cL',\tau') }$ to 
$\varphi_{(\cL,\tau)}.\varphi_{(\cL',\tau')}$, which is compatible with the 
commutativity and associativity isomorphisms. Summing up, we have proved the following theorem.

\begin{theorem}
\label{thm:morphismes_induits_par_fi_cubistes}
 Let $\cG$ be a commutative group $S$-stack.
\begin{enumerate}
 \item Let $(\cL,\tau)$ be a cubical line bundle on $\cG$.
Then there is a natural additive functor $\varphi_{(\cL,\tau)} : \cG\to 
D(\cG)$, given by the formula
\[
\begin{array}{lrcl}
\varphi_{(\cL,\tau)} : & \cG & \longrightarrow & D(\cG) \\
    & a & \longmapsto & \big(b\mapsto \theta_2(\cL)_{a,b} = \cL_{ab}\otimes \cL_a^{-1} \otimes \cL_b^{-1}\big) \, .
\end{array}
\]
\item The above construction induces an additive functor
\[
 \begin{array}{lrcl}
\varphi : & {\mathcal Cub}(\cG) & \longrightarrow &
{\mathcal Hom}(\cG,D(\cG)) \\
    & (\cL,\tau) & \longmapsto & \varphi_{(\cL,\tau)}\, .
\end{array}
\]
\end{enumerate}
\end{theorem}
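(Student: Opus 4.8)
The statement is the summary of the construction carried out in the preceding paragraphs, so the plan is simply to organize that construction into the two assertions. The guiding idea is that the additive structures on $\varphi_{(\cL,\tau)}$ are extracted from the symmetric biextension structure that a cubical structure $\tau$ induces on the $\Gm$-torsor $\theta_2(\cL)$.

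First I would record the bare morphism of stacks $\varphi_\cL : \cG \to \cHom_{S-stacks}(\cG,B\Gm)$, $a \mapsto (b\mapsto \theta_2(\cL)_{a,b})$; this is well defined because $\theta_2$ is a functor on $\PIC(\cG)$ and $\theta_2(\cL)_{a,b}$ is a $\Gm$-torsor, and functoriality in both variables follows from functoriality of pullbacks. Next, using the two canonical isomorphisms that recover $\theta(\cL)_{a,b,c}$ from $\theta_2(\cL)$, I would transport the cubical structure $\tau$ to the two partial composition laws $\tau^1_{a,b,c} : \theta_2(\cL)_{a,c}\otimes\theta_2(\cL)_{b,c} \to \theta_2(\cL)_{ab,c}$ and $\tau^2_{a,b,c} : \theta_2(\cL)_{a,b}\otimes\theta_2(\cL)_{a,c} \to \theta_2(\cL)_{a,bc}$. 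The crucial step, and the one that is laborious, is to check that conditions (i) and (ii) of Definition~\ref{def:cub} translate exactly into the commutativity, associativity and mutual-compatibility axioms making $\theta_2(\cL)$ a symmetric biextension of $(\cG,\cG)$ by $\Gm$; this is the stack-theoretic analogue of \cite[Chp I, 2.5]{Moret_Bailly_Pinceaux}, and concretely amounts to verifying Breen's coherence diagrams (1.1.3)--(1.1.6) of \cite{Breen_Fonctions_Theta}, using that the two canonical isomorphisms above are compatible with the cocycle isomorphisms $coc$ and the permutation isomorphisms $p^\sigma$ on $\theta(\cL)$.

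Granting this, part (1) is immediate: the composition law $\tau^2$, together with its associativity and commutativity constraints, equips each $\varphi_\cL(a)$ with the structure of an additive functor $\cG \to B\Gm$, hence a point of $D(\cG)$, so $\varphi_\cL$ factors through $D(\cG)$; and the composition law $\tau^1$ supplies functorial isomorphisms $\varphi_\cL(ab) \simeq \varphi_\cL(a)\cdot\varphi_\cL(b)$ which, by the biextension axioms, are compatible with the associativity and commutativity isomorphisms of $\cG$ and $D(\cG)$, i.e. constitute an additive structure; this additive functor is $\varphi_{(\cL,\tau)}$. For part (2) I would check functoriality: an isomorphism $\alpha : (\cL,\tau)\to(\cL',\tau')$ of cubical line bundles induces $\theta_2(\alpha):\theta_2(\cL)\to\theta_2(\cL')$, which is a morphism of biextensions because $\alpha$ respects the cubical structures, hence an isomorphism of additive functors $\varphi_{(\cL,\tau)}\to\varphi_{(\cL',\tau')}$, giving a morphism of stacks $\champcub(\cG)\to\cHom(\cG,D(\cG))$; and the canonical isomorphism $\theta_2(\cL\otimes\cL')\simeq\theta_2(\cL)\otimes\theta_2(\cL')$ of \cite[Chp I, 2.2.1]{Moret_Bailly_Pinceaux} identifies $\varphi_{(\cL,\tau)\otimes(\cL',\tau')}$ with $\varphi_{(\cL,\tau)}\cdot\varphi_{(\cL',\tau')}$ compatibly with the constraints, which is the additive structure on $\varphi$. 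The only genuine obstacle is the middle step: translating the cubical axioms into the symmetric-biextension axioms on $\theta_2(\cL)$ is a sequence of coherence diagram chases rather than a single conceptual point, and care is needed in pinning down the precise canonical isomorphisms involved.
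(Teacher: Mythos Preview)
Your proposal is correct and follows essentially the same route as the paper: the theorem there is stated as the summary of the preceding construction, which builds $\varphi_{(\cL,\tau)}$ by first writing down the bare morphism of stacks via $\theta_2(\cL)$, then extracting the two partial composition laws $\tau^1,\tau^2$ from $\tau$, invoking the translation of the cubical axioms into the symmetric-biextension axioms (the stack analogue of \cite[Chp I, 2.5]{Moret_Bailly_Pinceaux}, with the coherence checks pointed to Breen's diagrams (1.1.3)--(1.1.6)), and finally handling functoriality and additivity in $(\cL,\tau)$ via $\theta_2(\alpha)$ and the canonical isomorphism $\theta_2(\cL\otimes\cL')\simeq\theta_2(\cL)\otimes\theta_2(\cL')$ of \cite[Chp I, 2.2.1]{Moret_Bailly_Pinceaux}. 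Your identification of the diagram-chase step as the only laborious point matches the paper's treatment exactly.
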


\begin{remark}
\label{rem:comparaison_cas_abelien}
 If $a$ is a point of $\cG$, the morphism $\varphi_{(\cL,\tau)}(a) : \cG\to 
B\Gm$ 
corresponds to the line bundle $(\mu_a^*\cL)\otimes (f^*a^*\cL)^{-1} \otimes 
\cL^{-1}$ on $\cG$, where $\mu_a : \cG\to \cG$ is the translation by $a$ and $f 
: \cG\to 
S$ is the structural morphism. In particular, if $\cG$ is an abelian $S$-scheme 
$A$, then 
$\varphi_{(\cL,\tau)}$ coincides with the 
classical morphism $\varphi_{\cL} : A \to A^*$ defined by $\varphi_{\cL}(a)= 
(\mu_a^*\cL)\otimes \cL^{-1}.$
By~\cite[VIII Prop 1.8]{Raynaud_notes_de_cours} $\varphi_{\cL}=0$ if and only 
if $\cL\in \Pic^0(A)$, hence $\varphi$ factorizes through the Néron-Severi 
group $NS(A)$ and induces $\ov{\varphi} : NS(A)\to \Hom(A,A^*)$. \end{remark}

\section{The theorem of the cube for 1-motives.}
\label{section:thm_cube}

If $\cG$ is a commutative group stack with neutral object $e$, we denote 
by $\TR(\cG)$ the category of line bundles on $\cG$ \emph{rigidified along 
$e$}, 
i.e. 
the category of pairs $(\cL,\xi)$ where $\cL$ is a line bundle on $\cG$ and $\xi 
: 
\cO_S \to e^*\cL$ is an isomorphism of line bundles. 

\begin{theorem}[Theorem of the cube for 1-motives]
\label{thm:cube}
 Let $S$ be a scheme. Let $[X \stackrel{u}{\to} G]$ be a complex of commutative $S$-group schemes. 
Assume that one of the following holds:
\begin{enumerate}
 \item $G$ is an abelian scheme.
\item $S$ is normal, $X\times_S X$ is reduced, $G$ is smooth 
with 
connected fibers, and the maximal fibers of $G$ are multiple extensions of 
abelian varieties, tori (not necessarily split) and groups $\GG_a$.
\end{enumerate}
Let $\cM= \st([X \stackrel{u}{\to} G])$ be the commutative group stack 
associated to the above complex via the equivalence of categories (\ref{st}). 
Then 
the forgetful functor
$$\CUB(\cM) \longrightarrow \TR(\cM)$$
is an equivalence of categories.
\end{theorem}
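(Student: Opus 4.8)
The plan is to reduce the statement for the stack $\cM = \st([X \xrightarrow{u} G])$ to the already-known statement for the group scheme $G$. Recall (cf.\ \cite[Chp I]{Moret_Bailly_Pinceaux}) that over a normal base, or more generally under the hypotheses of case (2), the group scheme $G$ itself satisfies the Theorem of the Cube, i.e.\ the forgetful functor $\CUB(G) \to \TR(G)$ is an equivalence; in case (1) this is classical. So the real content is to transfer this equivalence across the quotient presentation $\iota : G \to \cM = [G/X]$. First I would use the description of line bundles on $\cM$ from Section~\ref{section:line_bundles}: a line bundle on $\cM$ is a pair $(L,\delta)$ with $L$ a line bundle on $G$ and $\delta$ a descent datum along $\iota$, and similarly a cubical (resp.\ rigidified) line bundle on $\cM$ is such a pair where $L$ is cubical (resp.\ rigidified) on $G$ and $\delta$ is compatible with the cubical (resp.\ rigidification) structure. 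The point is that $\theta$ and $e^*$ are compatible with pullback along $\iota$, so a descent datum on $L$ induces one on $\theta(L)$ and on $e^*L$.

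The strategy is then a two-step argument. \emph{Step 1: faithfulness and fullness.} Because $\iota^* : \PIC(\cM) \to \PIC(G)$ is faithful (noted at the end of Section~\ref{section:line_bundles}), and the forgetful functor $\CUB(G)\to\TR(G)$ is fully faithful, a diagram chase shows the forgetful functor $\CUB(\cM)\to\TR(\cM)$ is fully faithful: a morphism of rigidified line bundles on $\cM$ is a morphism $f : L\to L'$ on $G$ compatible with the descent data, it is automatically compatible with the cubical structures on $G$ (by the equivalence for $G$), and the compatibility of $f$ with the descent data on $\theta(L),\theta(L')$ follows from its compatibility on $L,L'$ since $\theta$ is functorial. \emph{Step 2: essential surjectivity.} Given a rigidified line bundle $(L,\delta)$ on $\cM$, i.e.\ $(L,\xi)$ rigidified on $G$ together with a descent datum $\delta$ compatible with $\xi$, the equivalence for $G$ endows $L$ with a unique cubical structure $\tau$ inducing $\xi$. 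The descent datum $\delta$ induces a descent datum $\theta(\delta)$ on $\theta(L)$, and one must check that $\delta$ is compatible with $\tau$ in the sense that $\theta(\delta)$ carries $\tau$ to $\tau$ (equivalently that the pulled-back cubical structure on $G\times_{\cM}G \cong X\times_S G$ via the two projections agree after twisting by $\delta$). But both $\mu^*\tau$ and $p_2^*\tau$ are cubical structures on $\mu^*L \cong p_2^*L$ (identified via $\delta$) inducing the same rigidification; by the uniqueness part of the equivalence for the group scheme $X\times_S G$ (which still satisfies the Theorem of the Cube, being an extension of the same type — this is where the hypothesis ``$X\times_S X$ reduced'' and ``$G$ smooth with connected fibers'' get used, to ensure $X\times_S G$ and its squares/cubes still fall under case (2)), they coincide. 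Hence $(L,\tau,\delta)$ is a cubical line bundle on $\cM$ lifting $(L,\xi,\delta)$, and the cocycle condition for $\theta(\delta)$ follows from that for $\delta$ by applying $\theta$.

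The main obstacle I expect is bookkeeping rather than conceptual: one must verify that the auxiliary group schemes appearing in the descent ($G\times_{\cM}G \cong X\times_S G$ and $G\times_{\cM}G\times_{\cM}G \cong X\times_S X\times_S G$) still satisfy the hypotheses of the theorem, so that the uniqueness of cubical structures can be invoked there. Since $X$ is étale-locally a lattice, étale-locally $X\times_S G$ is a finite disjoint union of copies of $G$, so it is smooth with the right kind of maximal fibers; and $X\times_S X\times_S X$ is reduced because $X\times_S X$ is (and $X$ is étale over $S$). The other delicate point is checking the compatibility of $\theta$ with the identifications $G\times_{\cM}G\cong X\times_S G$ — i.e.\ that $\theta$ applied to the descent datum really produces the descent datum associated to $\theta(L)$ — but this is formal from the functoriality of $\theta$ in the group stack and the fact that $m_{123}, m_{ij}, m_i$ all descend to $\cM$. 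I would also remark, as in the excerpt's statement after the theorem, that since $\Pic(\cM)/\Pic(S)$ is the group of rigidified line bundles on $\cM$, this equivalence combined with Theorem~\ref{thm:morphismes_induits_par_fi_cubistes} immediately yields the homomorphism $\Phi$ of Theorem~\ref{thm:existence_Phi}.
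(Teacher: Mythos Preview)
Your overall architecture matches the paper's: reduce to $G$ via the presentation $\iota : G\to\cM$, deduce full faithfulness from faithfulness of $\iota^*$ together with the known equivalence $\CUB(G)\simeq\TR(G)$, and for essential surjectivity lift the rigidification on $L=\iota^*\cL$ to a cubical structure $\tau$ on $L$ and then check that $\tau$ is compatible with the descent datum $\delta$. Step~1 is fine and essentially identical to the paper's argument.

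The gap is in Step~2. You propose to prove compatibility of $\tau$ with $\delta$ by observing that $\mu^*\tau$ and $\theta(\delta)\circ p_2^*\tau$ are two cubical structures on $\mu^*L$ over the group scheme $X\times_S G$ inducing the same rigidification, and then invoking uniqueness of cubical structures on $X\times_S G$. But $X\times_S G$ does \emph{not} satisfy the Theorem of the Cube: it does not have connected fibers, so it falls under neither hypothesis~(1) nor~(2), and in fact uniqueness genuinely fails. Already for $G$ trivial and $X=\ZZ$, the map $\tau(m,n,p)=\gamma^{mnp}$ (for any $\gamma\in\Gm(S)$) is a cubical structure on $\cO_\ZZ$ inducing the trivial rigidification, so there is a whole $\Gm(S)$ worth of cubical structures on the trivial rigidified bundle. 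Your remark that ``\'etale-locally $X\times_S G$ is a disjoint union of copies of $G$'' does not help: the cubical structure involves the group law of $X\times_S G$, which mixes the components.

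What the paper does instead is a direct computation. One writes down the obstruction $\lambda : X^3\times_S G^3\to\Gm$, $\lambda(x,a)=\tau_{u^3(x)a}^{-1}\circ\theta(\delta)_{x,a}\circ\tau_a$, and shows $\lambda\equiv 1$ in three moves: first, the explicit shape of $\theta(\delta)$ together with the cubical axioms on $\tau$ force $\lambda(x,a)=1$ whenever some $x_i=0$ and $a_i=1$; second, Rosenlicht's lemma applied to $G^3$ (not to $X\times_S G$) over the base $X\times_S X$ upgrades this to $\lambda(x,\cdot)$ being a group homomorphism in $a\in G^3$ whenever some $x_i=0$, whence $\lambda(x,a)=1$ whenever two of the $x_i$ vanish; third, the cocycle condition on $\delta$ gives $\lambda(x+x',a)=\lambda(x,u^3(x')a)\lambda(x',a)$, which then propagates triviality to all $x$. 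The hypothesis ``$X\times_S X$ reduced'' enters precisely to justify applying Rosenlicht over that base, not to put $X\times_S G$ into case~(2).
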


\begin{proof}
In the sequel, the group laws of $\cM$ and $G$ are denoted multiplicatively 
while the one of $X$ 
is denoted additively. We denote by $\iota : G\to \cM$ the canonical 
projection and by $1$ the unit section of~$G$. Then $\iota \circ 1 : S \to 
\cM$ is a neutral section of $\cM$ and will also be denoted by 1.

By (\ref{eq:theta(L)}) for 
any line bundle $\cL$ on $\cM$, there is a canonical isomorphism 
$\theta(\cL)_{1,1,1}\simeq \cL_1$, where $\cL_1$ is the line bundle $ 1^*\cL$ on 
$S$. Hence a cubical structure $\tau :
\cO_{\cM^3} \to \theta(\cL)$ on $\cL$ induces a natural rigidification of $\cL$
along the unit section that we still denote by
$\tau_{1,1,1} : \cO_S \to \cL_1$ (by a slight abuse of notation). The operation 
$(\cL,\tau) \mapsto (\cL, \tau_{1,1,1})$ defines a functor
$\CUB(\cM) \rightarrow \TR(\cM)$, which is the above-mentioned forgetful 
functor.
By \cite[Chp I, 2.6]{Moret_Bailly_Pinceaux} we already know that $G$ satisfies 
the theorem of the cube, i.e. 
that the forgetful functor $\CUB(G)\to \TR(G)$ is an equivalence of categories.

Let us prove that $\CUB(\cM)\to \TR(\cM)$ is fully faithful. Let $(\cL,\tau)$ 
and $(\cL', \tau')$ be two cubical line bundles on $\cM$ and let
 $f : \cL \to \cL'$ be a morphism in $\TR(\cM)$, i.e. a morphism which is 
compatible with the rigidifications $\tau_{1,1,1}$ and $\tau'_{1,1,1}$. We 
have to prove that $f$ is compatible with $\tau$ and $\tau'$, i.e. that 
$\tau'=\theta(f)\circ \tau$. Since the functor $\iota^*$ from the category of 
line bundles on $\cM$ to the category of line bundles on $G$ is faithful, this 
is 
equivalent to $\iota^*\tau'=\iota^*\theta(f)\circ \iota^*\tau$. But, up to 
canonical 
isomorphisms, $\iota^*\theta(f)$ identifies with $\theta(\iota^*f)$. Moreover, 
by 
assumption on $f$, $\tau'_{1,1,1}=f_1\circ \tau_{1,1,1}$, hence 
$(\iota^*\tau')_{1,1,1}=(\iota^*f)_1\circ(\iota^*\tau)_{1,1,1}$. This means 
that 
$\iota^*f 
: \iota^*\cL \to \iota^*\cL'$ is compatible with the rigidifications induced by 
the 
cubical structures $\iota^*\tau$ and $\iota^*\tau'$ on $\iota^*\cL$ and 
$\iota^*\cL'$. 
By the theorem of the cube for $G$, this implies the desired equality 
$\iota^*\tau'=\theta(\iota^*f)\circ \iota^*\tau$.

Now let us prove that $\CUB(\cM)\to \TR(\cM)$ is essentially surjective. As 
observed at the end of Section 1, a line bundle $\cL$ on $\cM$ is a pair
$(L,\delta)$ 
where $L=\iota^*\cL$ is a line bundle on~$G$ and $\delta : p_2^*L\to \mu^*L$ is 
a descent datum for $L$. Let $\xi : \cO_S\to \cL_1$ be a rigidification of 
$\cL$ 
along the unit section of $\cM$. Via the canonical 
isomorphism $\cL_1\simeq L_1$, $\xi$ is also a rigidification of $L$ along the 
unit section of $G$. By the theorem of the cube for $G$, there is a cubical 
structure $\tau : \cO_{G^3} 
\to \theta(L)$ that induces~$\xi$, i.e. such that $\tau_{1,1,1}=\xi$.
We want to construct a cubical structure $\ov{\tau} : \cO_{\cM^3} \to 
\theta(\cL)$ that induces $\xi$. The group 
stack $\cM^3$ is canonically 
isomorphic to the quotient stack $[G^3/X^3]$ with the action of $X^3$ on $G^3$ 
by translations via $u^3 : X^3 \to G^3$.  As for $\cM$, we identify the 
category 
of line bundles on $\cM^3$ with the category of line bundles on $G^3$ equipped 
with a descent datum. The line bundle $\cO_{\cM^3}$ corresponds to $\cO_{G^3}$ 
equipped with the canonical isomorphism $p_2^*\cO_{G^3} \to \mu^*\cO_{G^3}$ 
(where $p_2, \mu : X^3\times_S G^3 \to G^3$ respectively denote the second 
projection and the action by translation). The line bundle 
$\theta(\cL)$ on $\cM^3$ corresponds to the line bundle~$\theta(L)$ on~$G^3$ 
equipped with the descent datum $p_2^*\theta(L)\simeq \theta(p_2^*L) 
\stackrel{\theta(\delta)}{\to} \theta(\mu^*L) \simeq \mu^*\theta(L)$, that by 
a slight abuse we denote by $\theta(\delta)$. In terms of points, 
$\theta(\delta)$ can be described as follows: for any points $x=(x_1, x_2, 
x_3)$ of $X^3$ and $a=(a_1, a_2, a_3)$ of $G^3$,
\begin{equation}
\label{eq:description_theta_delta}
  \theta(\delta)_{x,a} : \theta(L)_a \to \theta(L)_{u^3(x)a}
\end{equation}
is equal to $\delta_{x_1+x_2+x_3,a_1a_2a_3}
\otimes \delta_{x_1+x_2,a_1a_2}^{-1}
\otimes \delta_{x_1+x_3,a_1a_3}^{-1}
\otimes \delta_{x_2+x_3,a_2a_3}^{-1}
\otimes \delta_{x_1,a_1}
\otimes \delta_{x_2,a_2}
\otimes \delta_{x_3,a_3}.$

We claim that the following diagram of line bundles on $X^3\times_S G^3$ 
commutes
\begin{equation}
\label{diag:tau_compat_phi}
 \xymatrix{p_2^*\cO_{G^3}
\ar[r]^{can.} \ar[d]_{p_2^*\tau} &\mu^*\cO_{G^3}
\ar[d]^{\mu^*\tau} \\
p_2^* \theta(L)\ar[r]_{\theta(\delta)} & \mu^*\theta(L)\, .
}
\end{equation}
The proof of this claim will be the main part of the proof. It is equivalent 
to saying that for any points $x$ of $X^3$ and $a$ of $G^3$, we have 
$\theta(\delta)_{x,a}\circ \tau_a=\tau_{u^3(x)a}$.
For any $S$-scheme $U$, we identify $\Aut(\cO_U)$ with $\Gm(U)$ and this allows 
us to define a morphism of $S$-schemes
\[
 \begin{array}{lrcl}
\lambda : & X^3\times_SG^3 & \longrightarrow &
\Gm \\
    & (x,a) & \longmapsto &\tau_{u^3(x)a}^{-1}\circ\theta(\delta)_{x,a}\circ 
\tau_a\, .
\end{array}
\]
Now to prove the claim we have to prove that $\lambda$ is constant equal to 1.

By \eqref{eq:CC}, the following diagram commutes
$$\xymatrix{
\theta(L)_a \ar[rr]^{\theta(\delta)_{x+x',a}}
\ar[rd]_{\theta(\delta)_{x',a}}
&& \theta(L)_{u^3(x)u^3(x')a} \\
& \theta(L)_{u^3(x')a}\ar[ru]_{\theta(\delta)_{x,u^3(x')a}}
&
}$$
It follows that for any $x,x'\in X^3$ and any $a\in G^3$ we have the equation
\begin{align}
\label{eq:CCter}
 \lambda(x+x',a)=\lambda(x,u^3(x')a).\lambda(x',a)\, .
\end{align}
For any $x\in X^3, a\in G^3$ and any 
permutation $\sigma\in \Sgo_3$, by the condition~(i) of 
Definition~\ref{def:cub}, 
the left and right triangles in the following diagram commute (where for 
$a=(a_1,a_2,a_3)$ we write 
$a^{\sigma}=(a_{\sigma(1)},a_{\sigma(2)},a_{\sigma(3)})$)
$$\xymatrix@C=3,5pc@R=1pc{
&\theta(L)_a
\ar[r]^-{\theta(\delta)_{x,a}} \ar[dd]_{p_a^{\sigma}}
& \theta(L)_{u^3(x)a}
\ar[dd]^{p_{u^3(x)a}^{\sigma}} \\
\cO_U \ar[ru]^{\tau_a}
\ar[rd]_{\tau_{a^{\sigma}}} &&&
\cO_U \ar[lu]_{\tau_{u^3(x)a}}
\ar[ld]^-{\tau_{u^3(x^{\sigma})a^{\sigma}}} \\
&\theta(L)_{a^{\sigma}}
\ar[r]_-{\theta(\delta)_{x^{\sigma},a^{\sigma}}}
& \theta(L)_{u^3(x^{\sigma})a^{\sigma}}
}$$
The central square also commutes by construction of the canonical isomorphism 
$p_a^{\sigma}$ and of~$\theta(\delta)$. Hence
\begin{align}
 \label{eq:perm}
\lambda(x^{\sigma},a^{\sigma})=\lambda(x,a).
\end{align}

Now let us choose $x\in X^3$ and $a\in G^3$ such that $x_3=0$ and $a_3=1$. From 
the above description~\eqref{eq:description_theta_delta} of $\theta(\delta)$ we 
see that, via the canonical 
isomorphisms $\theta(L)_a\simeq \theta(L)_{1,1,1}$ and 
$\theta(L)_{u^3(x)a}\simeq \theta(L)_{1,1,1}$, the isomorphism 
$\theta(\delta)_{x,a}$ is just the identity of $\theta(L)_{1,1,1}$. Moreover, 
as in \cite[Chp I, 2.5.3]{Moret_Bailly_Pinceaux}, from condition 
(ii) of Definition \ref{def:cub} it follows that 
$\tau_a=\tau_{u^3(x)a}=\tau_{1,1,1}$. Using 
\eqref{eq:perm}, we get 
\begin{equation}
\label{eq:triv}
\lambda(x,a)=1
\end{equation}
as soon as there is an index $i$ 
such that $x_i=0$ and $a_i=1$. In particular, if $x_i=0$ for some $i$, we 
have $\lambda(x,1)=1$. Hence Lemma~\ref{lemma:invertible_sections_on_a_sav},
applied to the $S$-group scheme $G^3$, implies that $\lambda$ is a group 
homomorphism 
in the variable $a$, i.e. for any $x\in X^3$ such that some $x_i$ is zero, and 
for any $a,a'\in G^3$ we have
\begin{align}
\label{eq:gpe}
 \lambda(x,aa')=\lambda(x,a).\lambda(x,a')
\end{align}
[Actually Rosenlicht only applies when the base scheme $S$ is reduced. But we
apply it for the ``universal'' point $(\id_{X\times_S X},0)\in X^3(U)$ where 
the base scheme $U=X\times_S X$ is reduced, and the general case follows.]
In particular for $x=(x_1,0,0)\in X^3$ and for any $a=(a_1,a_2,a_3)\in G^3$, 
using \eqref{eq:gpe} and \eqref{eq:triv} we get
\[
 \lambda(x,a)=\lambda(x,(a_1,a_2,1))\lambda(x,(1,1,a_3))=1
\]
By \eqref{eq:perm} this proves that $\lambda(x,a)=1$ as soon as two of the 
$x_i$'s are zero and finally using \eqref{eq:CCter} this proves that $\lambda$ 
is constant equal to 1. This finishes the proof of the claim.

Now, the commutativity of (\ref{diag:tau_compat_phi}) means that
$\tau$ is an isomorphism in the category of line bundles on $G^3$ equipped with
descent data. Hence it corresponds to an isomorphism $\ov{\tau} :
\cO_{\cM^3} \to \theta(\cL)$. Moreover, the condition
 (i) (resp. (ii)) of Definition \ref{def:cub} can be expressed by the
commutativity of
some diagrams of line bundles over $\cM^3$ (resp. $\cM^4$). Since the
functor~$\iota^*$ is faithful, the fact that $\tau$ satisfies
the conditions (i) and (ii) of Definition \ref{def:cub} implies that $\ov{\tau}$
itself satisfies
these two conditions. Hence $\ov{\tau}$ is a cubical structure on $\cL$. From 
$\tau_{1,1,1}=\xi$ it follows that $\ov{\tau}_{1,1,1}=\xi$ and this concludes 
the proof of the theorem.
\end{proof}

\begin{corollary}
\label{cor:existence_phi_par_cube}
 With the notation and assumptions of Theorem~\ref{thm:cube}, there is a functorial 
group homomorphism $\Phi' : \Pic(\cM)/\Pic(S) \to \Hom(\cM,D(\cM))$. 
\end{corollary}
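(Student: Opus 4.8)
The plan is to read off $\Phi'$ as a composite of three maps, each already supplied by the results above: the identification of $\Pic(\cM)/\Pic(S)$ with the isomorphism classes of rigidified line bundles on $\cM$, the equivalence of Theorem~\ref{thm:cube}, and the additive functor of Theorem~\ref{thm:morphismes_induits_par_fi_cubistes}(2).

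First I would check that $\Pic(\cM)/\Pic(S)$ is canonically isomorphic to the group $\TR^1(\cM)$ of isomorphism classes of objects of $\TR(\cM)$, exactly as in the remark following Theorem~\ref{thm:cube}. The structural morphism $\cM\to S$ together with the neutral section $1:S\to\cM$ splits the restriction $1^*:\Pic(\cM)\to\Pic(S)$, so $\Pic(\cM)\cong\Pic(S)\oplus\ker(1^*)$; the same retraction shows that $1^*:\Gamma(\cM,\cO_{\cM}^*)\to\Gamma(S,\cO_S^*)$ is surjective, whence any two rigidifications of a fixed line bundle on $\cM$ yield isomorphic rigidified line bundles. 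Therefore $(\cL,\xi)\mapsto[\cL]$ identifies $\TR^1(\cM)$ with $\ker(1^*)$, hence with $\Pic(\cM)/\Pic(S)$; being compatible with $\otimes$ this identification is a group isomorphism. Note that this argument uses only the neutral section and the retraction $\cM\to S$, so it applies verbatim to the complexes $[X\to G]$ allowed in Theorem~\ref{thm:cube}, not just to 1-motives.

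Next, Theorem~\ref{thm:cube} tells us that the forgetful functor $\CUB(\cM)\to\TR(\cM)$ is an equivalence of categories; since it is plainly compatible with the tensor products of cubical and of rigidified line bundles, it induces an isomorphism of abelian groups on isomorphism classes, whose inverse I denote $j:\TR^1(\cM)\xrightarrow{\ \sim\ }\CUB^1(\cM)$. On the other hand, Theorem~\ref{thm:morphismes_induits_par_fi_cubistes}(2) furnishes an additive functor $\varphi:\champcub(\cM)\to\cHom(\cM,D(\cM))$ of commutative group stacks; passing to isomorphism classes yields a group homomorphism $\CUB^1(\cM)\to\Hom(\cM,D(\cM))$, the target being the group of isomorphism classes of additive functors $\cM\to D(\cM)$. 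Composing, I would set
\[
\Phi':\ \Pic(\cM)/\Pic(S)\ \xrightarrow{\ \sim\ }\ \TR^1(\cM)\ \xrightarrow{\ j\ }\ \CUB^1(\cM)\ \longrightarrow\ \Hom(\cM,D(\cM)),
\]
which is a group homomorphism as a composite of group homomorphisms, and which sends the class of a line bundle $\cL$ on $\cM$ to $\varphi_{(\cL,\tau)}$, where $\tau$ is the essentially unique (by Theorem~\ref{thm:cube}) cubical structure on $\cL$ refining a chosen rigidification — a recipe independent of all choices by the previous paragraph. Functoriality of $\Phi'$ in $\cM$, and under base change on $S$, is then immediate, since each of the three steps is compatible with pullback along morphisms of complexes, equivalently along additive functors.

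Since all the inputs are already established, there is no genuine obstacle; the only points needing a line of verification are that the equivalence of Theorem~\ref{thm:cube} is monoidal, so that it truly descends to a group isomorphism on isomorphism classes and not merely a bijection of sets, and the purely formal check that the identification $\Pic(\cM)/\Pic(S)\cong\TR^1(\cM)$ persists in the slightly broader setting of Theorem~\ref{thm:cube}. Both are bookkeeping rather than mathematics.
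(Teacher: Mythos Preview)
Your proposal is correct and follows exactly the same route as the paper: identify $\Pic(\cM)/\Pic(S)$ with isomorphism classes of rigidified line bundles, invoke Theorem~\ref{thm:cube} to pass to cubical line bundles, and then apply Theorem~\ref{thm:morphismes_induits_par_fi_cubistes}. The paper's proof is a two-line version of yours; you have simply spelled out the bookkeeping (the splitting via the neutral section, the monoidality of the forgetful functor) that the paper leaves implicit.
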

\begin{proof}\ 
Since $\Pic(\cM)/\Pic(S)$ is isomorphic to the group of isomorphism classes of 
rigidified line bundles on $\cM$, this 
is an immediate consequence of Theorems~\ref{thm:morphismes_induits_par_fi_cubistes} 
and~\ref{thm:cube}.
\end{proof}

\begin{theorem}
 \label{thm:comparaison}
 Let $M$ be a 1-motive defined over a scheme $S$. Assume that the base scheme $S$ is normal. The morphism 
$\Phi'$ defined above coincides with the morphism 
$\Phi : \Pic(M) / \Pic(S) \to \Hom(M,M^*)$ constructed in 
Section~\ref{section:direct_construction}.
\end{theorem}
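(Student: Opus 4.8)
The plan is to show that, once one identifies $\Hom(\cM,D(\cM))$ with $\Hom(M,M^*)$ via the equivalence $\st$ and the isomorphism $D(\st(M))\simeq\st(M^*)$ of~\cite[(10.2.11)]{D1}, the additive functor $\Phi'(\cL,\delta)=\varphi_{(\cL,\tau)}:\cM\to D(\cM)$ produced by the cubical construction and the morphism of $1$-motives $\Phi(\cL,\delta):M\to M^*$ built in Section~\ref{section:direct_construction} agree, for every line bundle $(\cL,\delta)$ on $M$. Here, over the normal base $S$, the hypotheses of Theorem~\ref{thm:cube}(2) are satisfied by $\cM=\st(M)$, so every rigidified line bundle on $\cM$ carries a unique cubical structure; we write $\tau$ for the one inducing the chosen rigidification of $(\cL,\delta)$, so that $\Phi'(\cL,\delta)$ is the morphism of $1$-motives corresponding to $\varphi_{(\cL,\tau)}$.

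First I would reduce to the case where the underlying line bundle of $(\cL,\delta)$ on $G$ is $\pi^*L$ for some line bundle $L$ on $A$. Both $\Phi$ and $\Phi'$ factor through $\Pic(M)/\Pic(S)$, and both are compatible with \'etale base change on $S$ — for $\Phi'$ this is clear from Theorems~\ref{thm:morphismes_induits_par_fi_cubistes} and~\ref{thm:cube}, and for $\Phi$ it was built into the construction. Since morphisms of $1$-motives form a sheaf for the \'etale topology, we may therefore pass to an \'etale cover of $S$ as in Remark~\ref{rem:vanishing_Pic(T)}(2) and Section~\ref{section:direct_construction} and assume $\iota^*\cL=\pi^*L$; the descent datum $\delta$ stays arbitrary.

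Next I would compare $\Phi(\cL,\delta)$ and $\Phi'(\cL,\delta)$ componentwise. Each is a morphism of complexes $[X \stackrel{u}{\to} G]\to[T^D \stackrel{u'}{\to} G']$, hence given by a map $G\to G'$ on the weight-$(-1)$ parts together with a map $X\to T^D$ on the lattice parts. Using the functoriality of $\varphi_{(\cL,\tau)}$ with respect to the additive functors $\iota:G\to\cM$ and $\pi:G\to A$, together with Remark~\ref{rem:comparaison_cas_abelien} (in the abelian case $\varphi_{(L,\tau_A)}$ is the classical $\varphi_L:A\to A^*$), one sees that the $G\to G'$ component of $\Phi'(\cL,\delta)$ lifts $\varphi_L$; by Lemma~\ref{lem:morphisme_du_milieu}(2) there is at most one such lift, so it suffices to identify the two maps $G\to G'$ at points, and afterwards the two maps $X\to T^D$. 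For $g\in G(S)$, by Remark~\ref{rem:comparaison_cas_abelien} the object $\varphi_{(\cL,\tau)}(\iota(g))$ of $D(\cM)(S)$ corresponds to (the class of) the line bundle $\mu_{\iota(g)}^*\cL\otimes\cL^{-1}$ on $\cM$ equipped with the extension structure coming from the partial composition law $\tau^2$ on $\theta_2(\cL)$; pulling back along $\iota$ and using $\mu_g^*(\pi^*L)\otimes(\pi^*L)^{-1}=\pi^*\varphi_L(\pi(g))$, its underlying extension of $G$ by $\Gm$ is the pullback along $\pi$ of the extension $E_{\varphi_L(\pi(g))}$ of $A$ by $\Gm$. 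Under the description of $M^*$ recalled in Section~\ref{section:direct_construction} — points of $G'$ are extensions of $\ov M$ by $\Gm$, i.e. extensions of $A$ by $\Gm$ with a trivialization along $X$ — this says precisely that the $G\to G'$ component of $\Phi'(\cL,\delta)$ sends $g$ to a pair whose underlying $A$-extension is $E_{\varphi_L(\pi(g))}$, the same as $\philt(g)$ in~\eqref{eq:def_phiL_tilde}. It then remains to match the two trivializations of $v^*E_{\varphi_L(\pi(g))}$: the one extracted from $\tau$ and the datum $\vt_g$ of~\eqref{eq:def_vtilde}. Since $\pi^*$ is faithful on line bundles, this comes down, exactly as in the proof of Proposition~\ref{prop:seconde_construction}, to the cocycle condition~\eqref{eq:CC} on $\delta$ (applied to $\delta_{x+y}$ and to $\delta_{y+x}$) together with the symmetry and cocycle compatibilities~(i), (ii) of Definition~\ref{def:cub} that pin down $\tau$. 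The identification of the lattice components $X\to T^D$ — that both equal $h^D$ — follows in the same way from the explicit description of $h$ in Remark~\ref{rem:description_h}.

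I expect the main obstacle to be the bookkeeping needed to reconcile the two models of the Cartier dual: the description of $M^*$ via extensions of $\ov M$ by $\Gm$ that underlies the construction of $\philt$ in Section~\ref{section:direct_construction}, and the description $D(\cM)=\cHom(\cM,B\Gm)$ used by the cubical construction. Making the isomorphism $D(\st(M))\simeq\st(M^*)$ of~\cite[(10.2.11)]{D1} explicit enough to see how it interchanges the projection $\iota':G'\to D(\cM)$ with the ``classify-extensions'' picture, and how it transports rigidifications and the $X$-trivialization data, is the delicate point. Once this dictionary is in place, the remaining verifications are the cocycle computations already carried out in Sections~\ref{section:direct_construction} and~\ref{section:linear_morphisms_def_by_cubical}, so no genuinely new computation should be needed.
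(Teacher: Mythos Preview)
Your reduction to the case $\iota^*\cL=\pi^*L$ and the overall plan of comparing componentwise match the paper. However, there are two issues worth noting.

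First, your invocation of Lemma~\ref{lem:morphisme_du_milieu}(2) is not quite right. The lemma says that a morphism $G\to G'$ is determined by \emph{both} the induced maps $T\to X^D$ and $A\to A^*$, not by the latter alone. So knowing that the $G\to G'$ component of $\Phi'(\cL,\delta)$ lifts $\varphi_L$ does not by itself force it to equal $\philt$; you also need to check that the two restrictions to $T$ agree. Once you realize this, the sentence ``so it suffices to identify the two maps $G\to G'$ at points'' no longer follows from the lemma, and you are left with the direct pointwise comparison you sketch afterwards.

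Second, and more substantively, the paper avoids exactly the ``delicate point'' you flag. Rather than attempting to match $\Phi'(\cL,\delta)$ and $\philt$ on $G$-points directly through the dictionary $D(\st(M))\simeq\st(M^*)$ --- which, as you say, would require making that equivalence quite explicit and tracking the trivialization data --- the paper checks only the three ``edge'' morphisms $A\to A^*$, $T\to X^D$, and $X\to T^D$. The first follows from functoriality and Remark~\ref{rem:comparaison_cas_abelien}, as you observe. For $T\to X^D$, the paper computes the action of $\Phi'(\cL,\delta)(i(t))$ on an \emph{arrow} $x:1\to u(x)$ in $\st(M)$: this produces the isomorphism $\theta_2(\cL)_{i(t),1}\to\theta_2(\cL)_{i(t),u(x)}$, which is $\delta_{x,i(t)}\otimes\delta_{x,1}^{-1}$ and matches Remark~\ref{rem:description_h} on the nose. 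The map $X\to T^D$ is handled symmetrically. Lemma~\ref{lem:morphisme_du_milieu}(2) (now applied correctly, with both $h$ and $\varphi_L$ fixed) then forces the $G\to G'$ components to agree without ever unpacking the extension-of-$\ov M$ description of a point of $G'$ against the $\cHom(\cM,B\Gm)$ picture. This sidesteps the bookkeeping you identified as the main obstacle.
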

\begin{proof}
Let $(\cL,\delta)$ be line bundle on $M$. We want to prove that 
$\Phi(\cL,\delta)=\Phi'(\cL,\delta)$. The question is local on $S$ hence 
as in section~\ref{section:direct_construction} we may assume that the line 
bundle $\cL$ on $G$ is induced by a line 
bundle $L$ on $A$, i.e. $\cL=\pi^*L$. To prove the theorem it suffices to prove 
that the morphisms $A\to A^*$, $X\to T^D$ and $T\to X^D$ induced by 
$\Phi'(\cL,\delta)$ 
are respectively equal to the $\varphi_L$, $h^D$ and $h$ of
section~\ref{section:direct_construction}.
 
The Cartier dual of $G$ as a 1-motive is 
$G^*=[T^D\stackrel{v'}{\rightarrow}A^*]$ and $\Hom(G,G^*)=\Hom(A,A^*)$. By 
functoriality of $\Phi'$, the morphisms $\iota : G\to M$ and $\pi : G\to A$ 
induce a commutative diagram:
\[
\xymatrix{
\Pic(M) \ar[r]^{\iota^*} \ar[d]_{\Phi'} &
\Pic(G) \ar[d]^{\Phi_G'} &
\Pic(A) \ar[l]_{\pi^*} \ar[d]^{\Phi_A'} \\
\Hom(M,M^*) \ar[r] &
\Hom(G,G^*) &
\Hom(A,A^*) \ar[l]_{\sim} 
}
\]
The morphism $A\to A^*$ induced by $\Phi'(\cL,\delta)$ is the image of 
$\Phi'(\cL,\delta)$ under the bottom horizontal map of this diagram. Hence it 
is equal to $\Phi'_A(L)$, which is equal to $\varphi_L$ by 
Remark~\ref{rem:comparaison_cas_abelien}.

Now let us prove that the morphism $\xi : T\to X^D$ induced by 
$\Phi'(\cL,\delta)$ is 
equal to $h$. To this end we consider the action of $\Phi'(\cL,\delta)$ on the 
objects of $\st(M)$. Let $t\in T(S)$ be a point of $T$. Its image 
$i(t)\in G(S)$ induces an object of the stack $\st(M)$ still denoted by $i(t)$, 
and by definition $\Phi'(\cL,\delta)(i(t))$ is the morphism from $\st(M)$ to 
$B\Gm$ that 
maps an object $b$ to $\theta_2(\cL)_{i(t), b}$. To get the induced morphism 
from $X$ to $\Gm$ it suffices to consider the action of 
$\Phi'(\cL,\delta)(i(t))$ 
on the 
arrows of the stack $\st(M)$. If $b_1, b_2 \in G(S)$ 
and if $x\in X(S)$ is an arrow from $b_1$ to $b_2$ in $\st(M)$ (i.e. 
$u(x)=b_2-b_1$) then  
$\Phi'(\cL,\delta)(i(t))$ maps this arrow to the induced isomorphism from
$\theta_2(\cL)_{i(t),b_1}$ to $\theta_2(\cL)_{i(t),b_2}$. The induced element 
$\xi(t)(x)\in \Gm(S)$ does not depend on the choice of the source $b_1$ hence 
we may chose 
$b_1=1$ and $\xi(t)(x)$ is the point of $\Gm(S)$ induced by the isomorphism 
$\theta_2(\cL)_{i(t),1}\to \theta_2(\cL)_{i(t),u(x)}$ induced by $\delta$. The 
latter is~$\delta_{x,i(t)}\otimes \delta_{0,i(t)}^{-1} \otimes 
\delta_{x,1}^{-1}$. But, by the cocycle condition~\eqref{eq:CC},
$\delta_{0,i(t)}$ is the identity, hence this corresponds to the description of 
$h$ given in Remark~\ref{rem:description_h}.

To prove that $\Phi'(\cL,\delta)$ induces $h^D$ from $X$ to $T^D$ we have to 
consider its action on the arrows of $\st(M)$. The argument is very similar to 
the above one and left to the reader.
\end{proof}

\begin{remark}
\label{rem:comments_normalness}
The hypothesis of normalness on $S$ is essential in order to identify the 
categories of cubical line bundles with the categories of line bundles 
rigidified along the unit section, even on a torus. See \cite[Chp I, Example 
2.6.1]{Moret_Bailly_Pinceaux} for a counter-example.
Hence if the base scheme $S$ is not normal, we only have the functorial 
homomorphism $\CUB^1(M)\to \Hom(M,M^*)$ given 
by Theorem~\ref{thm:morphismes_induits_par_fi_cubistes}. The morphism $\CUB^1(M) \to 
\Pic (M) /\Pic(S)$ induced by the forgetful functor $\CUB(M)\to \TR(M)$ is 
neither injective nor surjective in general. If $S$ is reduced, we can prove 
that the forgetful functor is fully faithful, hence $\CUB^1(M) \to 
\Pic (M) /\Pic(S)$ is injective. This inclusion is an isomorphism if the base 
scheme $S$ is normal.
\end{remark}



\end{document}